\theoremstyle{plain}
   \newtheorem{theorem}{Theorem}[section]
   \newtheorem{proposition}[theorem]{Proposition}
   \newtheorem{lemma}[theorem]{Lemma}
   \newtheorem{corollary}[theorem]{Corollary}
\theoremstyle{definition}
   \newtheorem{example}[theorem]{Example}
\theoremstyle{remark}
   \newtheorem{remark}[theorem]{Remark}
 \numberwithin{equation}{section}
\author[P.~Br\"and\'en]{Petter Br\"and\'en$^\ast$}
\thanks{$^\ast$Royal Swedish Academy of Sciences Research Fellow supported by a grant from the
Knut and Alice Wallenberg Foundation. Partially supported by the G\"oran Gustafsson Foundation.}
\address{Department of Mathematics, Royal Institute of Technology, SE-100 44 Stockholm, Sweden}
\email{pbranden@kth.se}
\author[L.~Moci]{Luca Moci$^\dag$}
\thanks{$^{\dag}$Marie Curie Fellow of Istituto Nazionale di Alta Matematica. Partially supported by 
PRIN 2009 ``Spazi di moduli e teoria di Lie"}
\address{IMJ, Universit\'{e} de Paris 7, 175 Rue du Chevaleret, 75013 France}
\email{lucamoci@hotmail.com}
\keywords{Tutte polynomial, multivariate Tutte polynomial, Potts model, toric arrangement, chromatic polynomial, matroid, arithmetic matroid, abelian group, quasi-polynomial}
\def\kkk{\kern.2ex\mbox{\raise.5ex\hbox{{\rule{.35em}{.12ex}}}}\kern.2ex}
\newcommand{\kk}{\mathbf{k}}
\newcommand{\vv}{\mathbf{v}}
\newcommand{\NN}{\mathbb{N}}
\newcommand{\PP}{\mathcal{P}}
\newcommand{\ZT}{\mathcal{Z}}
\newcommand{\GG}{\mathcal{G}}
\newcommand{\BB}{\mathcal{B}}
\newcommand{\CT}{\mathcal{C}}
\newcommand{\LL}{\mathcal{L}}
\newcommand{\A}{\mathcal{A}}
\newcommand{\MM}{\mathcal{M}}
\newcommand{\RR}{\mathbb{R}}
\newcommand{\Di}{\mathcal{D}}
\newcommand{\ZZ}{\mathbb{Z}}
\newcommand{\cA}{\mathcal{A}}
\renewcommand{\Im}{{\rm Im}}
\def\newop#1{\expandafter\def\csname #1\endcsname{\mathop{\rm
#1}\nolimits}}
\newcommand{\EA}{E}
\newcommand{\IA}{I}
\begin{document}

\title[The multivariate arithmetic Tutte polynomial]{The multivariate arithmetic \\Tutte polynomial}

\maketitle
\thispagestyle{empty}
\begin{abstract}
We introduce an arithmetic version of the multivariate Tutte polynomial, and (for representable arithmetic matroids) a quasi-polynomial that interpolates between the two. A generalized Fortuin-Kasteleyn representation with applications to arithmetic colorings and flows is obtained. We give a new and more general proof of the positivity of the coefficients of the arithmetic Tutte polynomial, and (in the representable case) a geometrical interpretation of them.
\end{abstract}
\tableofcontents
\newpage
\section{Introduction}\label{introsec}

In this paper we introduce and study  a \emph{multivariate arithmetic Tutte polynomial}. Recall that the \emph{Tutte polynomial} is a bivariate polynomial with several well-known specializations: for instance the chromatic polynomial of a graph, or the characteristic polynomial of a hyperplane arrangement can be obtained by specializing the Tutte polynomial. Also, its coefficients are nonnegative, as proved by Crapo by providing an explicit combinatorial interpretation \cite{Cr,Tu}.

Recently an arithmetic version of this polynomial has been studied \cite{MoT, DM}. Namely, to a finite list $\LL$ of elements in a finitely generated abelian group $G$, one associates an \emph{arithmetic Tutte polynomial}. This is a bivariate polynomial that encodes several geometric, algebraic and combinatorial invariants. For instance:
\begin{itemize}
\item the characteristic polynomial of the generalized toric arrangement $\mathcal{T}(\LL)$.
This is a family of submanifolds in the abelian compact Lie group $\Hom(G, \mathbb{S}^1)$ (see \cite[Section 5]{MoT});
\item the dimension of the Dahmen-Micchelli space $DM(\LL)$. This vector space was introduced in order to study vector partition functions (see \cite[Section 6]{MoT});
\item the Ehrhart polynomial of the zonotope $\mathcal{Z}(\LL)$ (see \cite[Section 4]{MoT} and \cite{DM-G}).
\end{itemize}

Furthermore, the arithmetic Tutte polynomial has applications to graph theory \cite{DM-G}, and it has nonnegative coefficients, as proved in \cite{DM} by providing a combinatorial interpretation that extends Crapo's theorem.

The Tutte polynomial is naturally defined in the general framework of \emph{matroids}, while the arithmetic Tutte polynomial is associated to an \emph{arithmetic matroid} $\cA$, which is a matroid equipped with  a \emph{multiplicity function} $m$ satisfying additional axioms.
When $\cA$ is represented by a list of elements in a finitely generated abelian group, the function $m$ encodes arithmetic information, just as  the rank function encodes linear-algebraic information.

Recently Sokal \cite{Sok} studied a  multivariate  generalization of the Tutte polynomial. The \emph{multivariate Tutte polynomial} has a variable $v_e$ for each element $e$ of the matroid (or edge $e$ of the graph), and an extra variable $q$.
If all the variables $v_e$ are set to be equal,
we obtain a bivariate polynomial which is essentially
equivalent to the standard Tutte polynomial.
In the case of graphs, the polynomial is known to physicists as the
\emph{partition function of the $q$-state Potts model},
which along with the related
\emph{Fortuin--Kasteleyn random-cluster model}
plays an important role in the theory of phase transitions
and critical phenomena.
%
%
In this paper we introduce a \emph{multivariate arithmetic Tutte polynomial}
$$
Z_\cA(q,\vv) := \sum_{A \subseteq E} m(A)q^{-\rk(A)} \prod_{e \in A}v_e.
$$
(Here $E$ is the ground set of the arithmetic matroid $\cA$, while $\rk$ and $m$ are the rank and the multiplicity functions, respectively).
As the name suggests, $Z_\cA(q,\vv)$ generalizes the polynomials above. It is naturally defined starting from an arithmetic matroid, or more generally from what we call a \emph{pseudo-arithmetic matroid} (see Section \ref{axiomsec}). In fact, our polynomial encodes \emph{all} the structure of the (pseudo-)arithmetic matroid, i.e., it is possible to reconstruct $\cA$ from $Z_\cA(q,\vv)$.

For this polynomial we prove a deletion-contraction recurrence (Lemma \ref{del-con}) and a generalization of Crapo's formula (Theorem \ref{MAC}).

We also give a new proof of the nonnegativity of the coefficients of the (bivariate) arithmetic Tutte polynomial in the more general framework of pseudo-arithmetic matroids (Theorem \ref{nonneg}). 
Moreover, if $\cA$ is represented by a list $\LL$ of elements in a finitely generated abelian group $G$, then in Theorem \ref{geom-int} we give a geometrical interpretation of such coefficients, generalizing various formulae proved in \cite{DM, MoT}.

When $\cA$ is representable, we also provide a Fortuin-Kasteleyn formula for $Z_\cA(q,\vv)$ (Theorem \ref{F-K}),
with applications to arithmetic colorings. This can be seen as a generalization of the ``finite field method'' for computing the characteristic polynomial or the Tutte polynomial of a rational hyperplane arrangement \cite{Ard, Ath,WW}, as well as a generalization of a similar result for toric arrangements \cite{ERS}. We also introduce a generalized ``flow polynomial'' with applications to arithmetic flows (see Theorem \ref{flow}).

Furthermore, we introduce a quasi-polynomial $Z_\LL^P(q,\vv)$ that interpolates between the ordinary and the arithmetic multivariate Tutte polynomial (Theorems \ref{genFK}, \ref{MFK}, \ref{F-K}), and a \emph{Tutte quasi-polynomial} $Q_\LL(x,y)$ that intepolates between the corresponding bivariate polynomials, and specializes to a c\emph{hromatic quasi-polynomial} and to a \emph{flow quasi-polynomial}. The quasi-polynomial $Z_\LL^P(q,\vv)$ is the partition function of a generalized Potts model similar to the one studied by Caracciolo, Sportiello and Sokal, see \cite[Section 3.2]{Sok}.

\bigskip
\section{Arithmetic matroids and multivariate Tutte polynomials}\label{axiomsec}
The notion of an arithmetic matroid tries to capture the linear algebraic \emph{and} arithmetic information contained in a finite list of vectors in $\ZZ^n$. 

Let $\mathbb{N}:=\{0,1,2,\ldots\}$ and $2^E:=\{ A : A \subseteq E\}$, where $E$ is a finite set. We recall that a \emph{matroid},  $\MM$,  on $E$ may be defined via its \emph{rank function}, which is a function  $\rk:2^E\rightarrow \mathbb{N}$ satisfying:
\begin{itemize}
    \item[(R0)] $\rk(\emptyset)=0$.
    \item[(R1)] If $A,B\subseteq E$ and $A\subseteq B$, then $\rk(A)\leq \rk(B)$.
    \item[(R2)] If $A,B\subseteq E$, then $\rk(A\cup B)+\rk(A\cap B)\leq
    \rk(A)+\rk(B)$.
\end{itemize}

We shall now equip a matroid with a \emph{multiplicity function} $m : 2^E \rightarrow \RR$ satisfying certain positivity and/or divisibility properties. 

If $R \subseteq S\subseteq E$, let
$[R,S]:=\{A : R \subseteq A \subseteq S\}$. We say that $[R,S]$ is a \emph{molecule} if $S$ is the disjoint union $S=R\cup F \cup T$ and for each
$A \in [R,S]$
$$
\rk(A) = \rk(R)+ |A \cap F|.
$$
Note that if $[R,S]$ is a molecule and $[R',S'] \subseteq [R,S]$, then $[R',S']$ is a molecule. 
A \emph{pseudo-arithmetic matroid} $\cA=(\MM, m)$ is a matroid $\MM$ equipped with a function
$m:2^E\rightarrow \mathbb{R}$  satisfying
the following axiom:
\begin{itemize}
\item[(P)] If $[R,S]$ is a molecule, then
\begin{equation*}
\rho(R,S):=(-1)^{|T|} \sum_{A \in [R,S]} (-1)^{|S|-|A|}m(A) \geq 0.
\end{equation*}
\end{itemize}

A \emph{quasi-arithmetic matroid} $\cA=(\MM, m)$ is a matroid $\MM$ equipped with a function
$m:2^E\rightarrow \mathbb{N}$  satisfying
the following axioms:
\begin{itemize}
       \item[(A1)] For all $A\subseteq E$ and $e\in E$: \begin{align*}
       \text{If }\rk(A \cup \{e\})= \rk(A) \text{, then }&
    m(A\cup\{e\})\text{ divides }m(A)\\
    \text{ otherwise }& m(A)  \text{ divides } m(A\cup\{e\}).
       \end{align*}
    \item[(A2)] If $[R,S]$ is a molecule, then
    $$m(R)m(S) = m(R\cup F)m(R\cup T).$$
   \end{itemize}
An \emph{arithmetic matroid} is a quasi-arithmetic matroid which is also pseudo-arithmetic, i.e., it satisfies (A1), (A2) and (P).

\begin{remark}
The relevance of pseudo-arithmetic matroids is clear from Theorem \ref{nonneg}.  Quasi-arithmetic matroids also arise in combinatorial topology (see \cite[Remark 3.3]{BBC}). As it will be shown later in this paper, axioms (A1) and (A2) are ``algebraic" (see Lemmas \ref{grr}, \ref{molequot}), while axiom (P) has a more geometrical flavor (see Section \ref{geom-repr}).
\end{remark}

The \emph{dual} of an arithmetic (or pseudo-arithmetic) matroid is defined
as the matroid with the same ground set $E$, rank  function defined by
$$\rk^*(A):=|A|-\rk(E)+\rk(E\setminus A)$$ and multiplicity function defined by $m^*(A):=m(E\setminus A)$.
Notice that each of axioms (A1), (A2) and (P) is self-dual, so the dual of an arithmetic matroid is indeed an arithmetic matroid.

Our definition of arithmetic matroid is equivalent to the one given in \cite{DM}. In  \cite{DM}, the function $m$ satisfies five axioms. The first three correspond precisely to (A1) and (A2), while the others are:
  \begin{itemize}
  \item[(4)] If $A\subseteq B\subseteq E$ and $\rk(A)=\rk(B)$, then $\rho(A, B)\geq 0$.
  \item[(5)] If $A\subseteq B\subseteq E$ and $\rk^*(A)=\rk^*(B)$, then $\rho^*(A, B)\geq 0$.
  \end{itemize}
Clearly, axiom $(P)$ implies (4) and (5).
On the other hand, axioms (A2), (4), (5) together imply (P).
Indeed, by axiom (A2)
\begin{align*}
\rho(R,S) &= (-1)^{|T|} \sum_{A_1 \subseteq T, A_2 \subseteq F} (-1)^{|T|+|F|-|A_1|-|A_2|}\frac{m(R\cup A_1)m(R\cup A_2)}{m(R)} \\
               &= \frac{\rho(R, R\cup T)\rho(R, R\cup F)}{m(R)} \geq 0,
\end{align*}
where the last inequality follows from axioms (4) and (5).

The main example of an arithmetic matroid is the one associated to a
finite list of elements of a finitely generated abelian group
$G$, see Section \ref{rep}.

Recall \cite{Cr,Tu} that to each matroid $\MM$ is associated the \emph{Tutte polynomial} 
$$
T_\MM(x,y)= \sum_{A \subseteq E}(x-1)^{\rk(E)-\rk(A)}(y-1)^{|A|-\rk(A)}
$$
that Sokal \cite{Sok} generalized by defining a \emph{multivariate Tutte polynomial} in the variables $q^{-1}$, $\vv= \{v_e\}_{e \in E}$:
$$
Z_\MM(q,\vv) := \sum_{A \subseteq E} q^{-\rk(A)} \prod_{e \in A}v_e.
$$

Similarly, to each arithmetic matroid $\cA$, is associated the
\emph{arithmetic Tutte polynomial} 
$$
M_\cA(x,y)= \sum_{A \subseteq E} m(A)(x-1)^{\rk(E)-\rk(A)}(y-1)^{|A|-\rk(A)},
$$
see \cite{DM,MoT}, that we generalize to a \emph{multivariate arithmetic Tutte polynomial}:
$$
Z_\cA(q,\vv) := \sum_{A \subseteq E} m(A)q^{-\rk(A)} \prod_{e \in A}v_e.
$$
Of course these polynomials are also defined for a pseudo-arithmetic matroid. Note that
\begin{align}
Z_\MM\left( (x-1)(y-1), y-1\right) &= (x-1)^{-\rk(E)}T_\MM(x,y); \label{ZtoT}\\
Z_\A\left( (x-1)(y-1), y-1\right) &= (x-1)^{-\rk(E)}M_\cA(x,y). \label{ZtoM}
\end{align}
(By $\vv=y-1$ we mean that each variable $v_e$ is evaluated at $y-1$).

\bigskip
\section{Deletion and contraction}\label{last}

Let $\cA$ be an arithmetic matroid. Given an element $e\in
E$, the \textit{deletion} of $\cA$ by $e$ is the arithmetic
matroid, $\cA_1$, on the set
$E_1:=E\setminus \{e\}$, with rank function $\rk_1$ and multiplicity function $m_1$ that are just the
restriction of the corresponding functions of $\cA$.

We also define the \textit{contraction} of $\cA$ by $e$ as
the matroid $\cA_2$ on the set
$E_2:=E\setminus \{e\}=E_1$, with rank function $\rk_2$ given
by $\rk_2(A):=\rk(A\cup \{e\})-\rk(\{e\})$ and multiplicity function given by
$m_2(A):=m(A\cup \{e\})$ for
all $A\subseteq E_2$.

Clearly, the same constructions hold for pseudo-arithmetic matroids.

If an arithmetic matroid $\cA$ is represented by a
list $\mathcal{L}_E$ of elements of a finitely generated abelian group $G$ (see Section \ref{rep}), it is easy to check that the deletion $\cA_1$
is represented by the list $\mathcal{L}_{E_1}$ in $G$, while the contraction $\cA_2$
is represented by the list $E_2:=\{g_a+\langle g_e\rangle : a\in
E\setminus \{e\}\}$ of cosets in $G/\langle g_e\rangle$.



We say that $e\in E$ is:
\begin{itemize}
  \item \emph{free} (or a \emph{coloop}) if both
$$\rk_1(E\setminus\{e\})=\rk(E\setminus\{e\})=\rk(E)-1 \text{ and }\rk_2(E\setminus \{e\})=\rk(E)-1;$$
  \item \emph{torsion} (or \emph{a loop}) if both
$$\rk_1(E\setminus\{e\})=\rk(E)\text{ and }\rk_2(E\setminus \{e\})=\rk(E);$$
  \item  \textit{proper} otherwise, i.e. if both
$$\rk_1(E\setminus\{e\})=\rk(E)\text{ and } \rk_2(E\setminus
\{e\})=\rk(E)-1.$$
\end{itemize}

\begin{remark}\label{rem-mole}
An interval $[R, S]$ is a molecule if and only if 
the matroid defined by contracting all the elements in $R$ and deleting all the elements not in $S$ is such that each remaining element  is either a coloop or a loop. In this sense we use the word ``molecule" in a slightly more general meaning than in \cite{DM}.
\end{remark}

Let $\cA$ be an arithmetic matroid, and let $\cA_1$ and $\cA_2$ be its deletion and contraction by an element $e\in E$.

\begin{lemma}\label{del-con}
$$Z_{\cA}(q,\mathbf{v})=
\left\{
  \begin{array}{ll}
     Z_{\cA_1}(q,\mathbf{v}) + v_e \; Z_{\cA_2}(q,\mathbf{v}), & \hbox{if $e$ is a loop;} \\
    Z_{\cA_1}(q,\mathbf{v}) + (v_e/q)  Z_{\cA_2}(q,\mathbf{v}), & \hbox{otherwise} \\
 \end{array}
\right.
$$
\end{lemma}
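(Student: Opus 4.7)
The plan is to split the defining sum for $Z_\cA(q,\vv)$ according to whether the element $e$ belongs to $A$ or not, and then recognize the two resulting sums as $Z_{\cA_1}(q,\vv)$ and a scalar multiple of $Z_{\cA_2}(q,\vv)$. Writing every $A \subseteq E$ uniquely as either $A \subseteq E_1 = E\setminus\{e\}$ or $A = B \cup \{e\}$ with $B \subseteq E_1$, I would rewrite
$$
Z_\cA(q,\vv) = \sum_{A \subseteq E_1} m(A)\, q^{-\rk(A)}\prod_{f \in A} v_f \;+\; v_e \sum_{B \subseteq E_1} m(B \cup \{e\})\, q^{-\rk(B \cup \{e\})} \prod_{f \in B} v_f.
$$

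The first sum is exactly $Z_{\cA_1}(q,\vv)$, because by definition of the deletion the rank and multiplicity functions $\rk_1$ and $m_1$ agree with $\rk$ and $m$ on subsets of $E_1$. For the second sum, I would apply the definition of the contraction: $m_2(B) = m(B \cup \{e\})$ and $\rk_2(B) = \rk(B \cup \{e\}) - \rk(\{e\})$, so that $q^{-\rk(B \cup \{e\})} = q^{-\rk(\{e\})}\, q^{-\rk_2(B)}$. Factoring $q^{-\rk(\{e\})}$ out, the second sum becomes $v_e\, q^{-\rk(\{e\})}\, Z_{\cA_2}(q,\vv)$.

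It then suffices to compute $\rk(\{e\})$ in each of the two cases. Here is where I would invoke the classification of $e$ as loop, coloop, or proper from the previous page. If $e$ is a loop, then by definition $\rk_2(E \setminus \{e\}) = \rk(E)$, which combined with $\rk_2(E_2) = \rk(E) - \rk(\{e\})$ forces $\rk(\{e\}) = 0$, so the coefficient is $v_e$. In the remaining two cases (coloop or proper), one has $\rk_2(E \setminus \{e\}) = \rk(E) - 1$, hence $\rk(\{e\}) = 1$ and the coefficient is $v_e/q$. This yields exactly the two branches of the claimed formula.

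There is essentially no obstacle here; the only subtlety is confirming that the notion of ``loop'' coincides with $\rk(\{e\}) = 0$ (and ``non-loop'' with $\rk(\{e\}) = 1$) under the definitions given in this paper, which, as sketched above, is immediate from the relation $\rk_2(A) = \rk(A \cup \{e\}) - \rk(\{e\})$. Since deletion and contraction of (pseudo-)arithmetic matroids are defined solely through their rank and multiplicity functions, the argument works verbatim in the pseudo-arithmetic setting too, and no use of axioms (A1), (A2) or (P) is needed.
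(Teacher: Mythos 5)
Your proof is correct and takes essentially the same approach as the paper: split the sum over $A\subseteq E$ according to whether $e\in A$, identify the first part as $Z_{\cA_1}$, and pull out the factor $v_e\,q^{-\rk(\{e\})}$ from the second to recognize $Z_{\cA_2}$. The only cosmetic difference is that the paper introduces $\epsilon\in\{0,1\}$ directly, whereas you derive $\rk(\{e\})\in\{0,1\}$ from the paper's loop/coloop/proper trichotomy via $\rk_2(E\setminus\{e\})=\rk(E)-\rk(\{e\})$, which is a clean equivalent reformulation.
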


\begin{proof}
Decompose $Z_\cA(q,\vv)$ as
$$
Z_\cA(q,\vv) = \sum_{A \subseteq E, e\notin A} m(A)q^{-\rk(A)} \prod_{e \in A}v_e+ \sum_{A \subseteq E, e \in A} m(A)q^{-\rk(A)} \prod_{e \in A}v_e.
$$
If $e\notin A$, then $\rk(A)=\rk_1(A)$. Hence
the first summand equals $Z_{\cA_1}(q,\mathbf{v}).$
On the other hand if $e\in A$, then $\rk(A)=\rk_2(A)+\epsilon$, where $\epsilon=0$ if $e$ is a loop and $\epsilon=1$ otherwise. Thus
\begin{align*} \sum_{A \subseteq E, e \in A} \! \! \! m(A)q^{-\rk(A)} \prod_{e \in A}v_e &=v_e \! \! \sum_{A \subseteq E, e \in A}\! \!  m_2(A\setminus e)q^{-\rk_2(A\setminus e)-\epsilon} \prod_{b \in A\setminus e}v_b \\
&= v_e q^{-\epsilon} Z_{\cA_2}(q,\mathbf{v}).
\end{align*}
\end{proof}

\bigskip
\section{Positivity of coefficients and generalizations of Crapo's formula}\label{craposec}
The following combinatorial interpretation of the coefficients of the Tutte polynomial was proved in \cite{Cr}.

\begin{theorem}[Crapo]\label{Cra}
$$T(x,y)=\sum_{B\in \BB}x^{i(B)}y^{e(B)},$$
where $i(B)$ and $e(B)$ are the internal and external activity defined below.  
\end{theorem}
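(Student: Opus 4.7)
The plan is to prove Crapo's formula via the classical activity partition of $2^E$. Fix a total order on the ground set $E$. For a basis $B \in \BB$, call $e \in B$ \emph{internally active} if $e$ is the minimum of the unique cocircuit of $\MM$ contained in $(E \setminus B) \cup \{e\}$, and call $e \notin B$ \emph{externally active} if $e$ is the minimum of the unique circuit of $\MM$ contained in $B \cup \{e\}$. Write $\IA(B)$ and $\EA(B)$ for these two sets, and set $i(B) := |\IA(B)|$ and $e(B) := |\EA(B)|$.

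The key technical step is the \emph{activity partition}:
$$
2^E \;=\; \bigsqcup_{B \in \BB} \bigl[\, B \setminus \IA(B),\; B \cup \EA(B)\, \bigr].
$$
To establish this I would associate to each $A \subseteq E$ a distinguished basis $B(A)$ by a two-stage greedy procedure: first select a maximal independent subset $J \subseteq A$ by scanning the elements of $A$ in increasing order, then extend $J$ to a basis of $\MM$ by adjoining the smallest available elements of $E \setminus A$. Using the circuit and cocircuit exchange axioms one verifies that $A$ lies in the interval attached to $B(A)$, and only in that one. Proving that the greedy construction produces exactly the internally (resp.\ externally) active elements as the ones removed from (resp.\ added to) $B(A)$ is the main obstacle, and requires a careful manipulation of fundamental circuits and cocircuits with respect to the order.

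Once the partition is in hand, the Tutte polynomial expansion collapses interval by interval. Fix $B \in \BB$ and parametrize an arbitrary $A$ in its interval as $A = (B \setminus S) \cup T$ with $S \subseteq \IA(B)$ and $T \subseteq \EA(B)$. Each $s \in S$ is the minimum of a cocircuit meeting $B$ only at $s$, so removing $s$ strictly drops the rank by $1$; each $t \in T$ lies in a circuit contained in $B \cup \{t\}$, so adjoining it does not raise the rank. A short induction yields
$$
\rk(A) = |B| - |S|,\qquad \rk(E) - \rk(A) = |S|,\qquad |A| - \rk(A) = |T|.
$$
Substituting and factoring gives
\begin{align*}
\sum_{A \in [B \setminus \IA(B),\, B \cup \EA(B)]} (x-1)^{\rk(E)-\rk(A)}(y-1)^{|A|-\rk(A)}
&= \sum_{S \subseteq \IA(B)} (x-1)^{|S|} \sum_{T \subseteq \EA(B)} (y-1)^{|T|} \\
&= x^{i(B)}\, y^{e(B)},
\end{align*}
and summing over $B \in \BB$ together with the partition lemma completes the proof.
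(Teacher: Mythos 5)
The paper itself does not prove this statement: Theorem \ref{Cra} is quoted from Crapo's paper, and the machinery built later (the activity partition of Proposition \ref{bj}, itself cited from Crapo and Bj\"orner, combined with Lemma \ref{lemma-rho} specialized to $m\equiv 1$) is exactly the strategy you propose. So your route is the standard one, the same one the paper leans on implicitly. Your definitions of activity (minimality in the fundamental circuit $C(t,B)\subseteq B\cup\{t\}$, resp.\ the fundamental cocircuit $C^*(s,B)\subseteq (E\setminus B)\cup\{s\}$) agree with the paper's ``dependent on the elements following it'' convention, and the final interval-by-interval collapse is correct: once one knows that $2^E$ is partitioned into the intervals $[B\setminus I(B), B\cup E(B)]$ and that for $A=(B\setminus S)\cup T$ in such an interval one has $\rk(E)-\rk(A)=|S|$ and $|A|-\rk(A)=|T|$, the binomial identity $\sum_{S\subseteq I(B)}(x-1)^{|S|}\sum_{T\subseteq E(B)}(y-1)^{|T|}=x^{i(B)}y^{e(B)}$ finishes the argument.

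The gap is that the two facts carrying all the content are not actually proved. You explicitly defer the partition statement (``the main obstacle''), and the rank formula is dispatched with ``a short induction'', but that induction needs a genuine argument: after the externally active elements of $T$ have been adjoined, removing $s\in S$ must still drop the rank, i.e.\ one must know that $C^*(s,B)\cap T=\emptyset$. This follows from the orthogonality fact $s\in C(t,B)\Leftrightarrow t\in C^*(s,B)$ together with the minimality built into the activity definitions: if both memberships held, then $t<s$ (since $t=\min C(t,B)$) and $s<t$ (since $s=\min C^*(s,B)$), a contradiction. The same circuit--cocircuit bookkeeping is what makes your greedy assignment $A\mapsto B(A)$ well defined and the intervals disjoint and exhaustive --- this is precisely the content of Proposition \ref{bj}, which the paper outsources to \cite{Cr} and \cite{Bjorn}. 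As written, your proposal is an accurate outline of Crapo's proof with the decisive step acknowledged but not supplied; to count as a proof it would need that lemma carried out (or cited).
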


Let $\BB$ be the set of bases of a matroid on $E$, and let $\BB^*=\{E \setminus B : B \in \BB\}$ the bases of the dual matroid.
Fix a total order on $E$, and let  $B\in \BB$.
An element $e\in E\setminus B$ is \emph{externally active} on $B$
if $e$ is dependent on the list of elements of $B$ following it
(in the total order fixed on $E$). An element $e\in B$ is
\emph{internally active} on $B$ if in the dual matroid $e$ is externally active on the
complement $B^c:=E\setminus B\in \BB^*$. 
In other words, $e\in B$ is internally active if there is no element $f\in E$ such that $e$ follows $f$ and $B\setminus \{e\}\cup\{f\}$ is a basis.
Denote by $E(B)$ the set of externally active elements and by $e(B)$ its cardinality, which is called the
\emph{external activity} of $B$. In the same way,  denote by $I(B)$ the set of internally active elements, and by $i(B)$ its cardinality, which is called the \emph{internal activity} of $B$.

We will now extend Theorem \ref{Cra} in various directions. First we deal with molecules. 

\begin{lemma}\label{lemma-rho2}
Let $[R,S]=[R,R\cup F \cup T]$ be a molecule in a matroid $\MM$, and let $m : E \rightarrow \RR$ be arbitrary. Then
\begin{align*}
& \sum_{A \in [R,S]} m(A)q^{-\rk(A)} \prod_{e \in A} v_e  \\
&= q^{-\rk(S)}\left(\prod_{e \in R\cup F}v_e\right) \sum_{K \subseteq F, L \subseteq T} \rho(R\cup L, S \setminus K)
\prod_{e \in K}\left(\frac q {v_e}+1\right) \prod_{e \in L}(v_e+1). 
\end{align*}
Moreover if $(\MM,m)$ is a quasi-arithmetic matroid, then 
\begin{align*}
&\sum_{A \in [R,S]}m(A)q^{-\rk(A)} \prod_{e \in A} v_e  \\
&= q^{-\rk(S)}\left(\prod_{e \in R\cup F}v_e\right)\left(\sum_{K\subseteq F}\frac {\rho(R, R\cup (F\setminus K))}{m(R)}\prod_{e \in K}\left(\frac q {v_e}+1\right) \right) \\
&\times \left(\sum_{L\subseteq T}\rho(R\cup L, R\cup T) \prod_{e \in L}(v_e+1)\right). 
\end{align*}
Note that ${\rho(R, R\cup (F\setminus K))}/{m(R)}$ is an integer by Axiom (A1). 
\end{lemma}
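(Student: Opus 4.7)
The overall plan is to reduce both identities to a coefficient-of-$m(A)$ comparison. For the first identity this is a pure inclusion-exclusion argument on Boolean sublattices; for the second I use axiom (A2) to factor $\rho(R \cup L, S \setminus K)$ as a product, after which the $(K,L)$-sum splits.

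For the first identity, parametrise $A \in [R,S]$ uniquely as $A = R \cup F_0 \cup T_0$ with $F_0 \subseteq F$, $T_0 \subseteq T$; the molecule condition gives $\rk(A) = \rk(R)+|F_0| = \rk(S) - |F\setminus F_0|$. Pulling out $q^{-\rk(S)}\prod_{e \in R\cup F}v_e$ rewrites the LHS as
\[
q^{-\rk(S)}\prod_{e \in R\cup F}v_e \sum_{F_0 \subseteq F,\, T_0 \subseteq T} m(R \cup F_0 \cup T_0) \prod_{e \in F\setminus F_0}\frac{q}{v_e}\prod_{e \in T_0}v_e.
\]
On the RHS I expand the two binomial products $\prod_{e\in K}(q/v_e+1)$ and $\prod_{e\in L}(v_e+1)$, swap the order of summation, and unfold the definition of $\rho$. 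Extracting the coefficient of a fixed $m(R \cup F_0 \cup T_0)$ produces two independent alternating sums of the form $\sum_{K' \subseteq K \subseteq F \setminus F_0}(-1)^{|K|}$ and $\sum_{L' \subseteq L \subseteq T_0}(-1)^{|L|}$, which by the standard identity $\sum_{X' \subseteq X \subseteq Y}(-1)^{|X|}=(-1)^{|X'|}[X' = Y]$ force $K' = F \setminus F_0$ and $L' = T_0$. A routine sign count then shows the total coefficient is $+1$, recovering the LHS expression.

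For the second identity, the central step is the factorisation
\[
\rho(R \cup L, S \setminus K) = \frac{\rho(R, R \cup (F \setminus K))\,\rho(R \cup L, R \cup T)}{m(R)}.
\]
This extends the product formula $\rho(R,S) = \rho(R, R \cup F)\rho(R, R \cup T)/m(R)$ already derived in Section \ref{axiomsec}: first apply that formula to the sub-molecule $[R \cup L, S \setminus K]$ to factor through $m(R \cup L)$, and then use (A2) on each molecule $[R, R \cup L \cup A_2]$ with $A_2 \subseteq F \setminus K$ to rewrite $m(R \cup L \cup A_2) = m(R \cup L)\,m(R \cup A_2)/m(R)$; the factor $m(R \cup L)$ cancels and the claimed identity drops out. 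Substituting this into the first identity makes the $(K,L)$-sum split as a product of a $K$-sum and an $L$-sum, matching the formula. The integrality of $\rho(R, R \cup (F\setminus K))/m(R)$ is then immediate from (A1): every element of $F$ strictly raises the rank over any intermediate set, so iterated use of (A1) gives $m(R) \mid m(R \cup F_0)$ for every $F_0 \subseteq F \setminus K$, and hence $m(R)$ divides each term of the defining alternating sum.

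The main obstacle I foresee is the sign-bookkeeping in the first step: the three overlapping sources of signs --- $(-1)^{|T \setminus L|}$ and $(-1)^{|S \setminus K| - |A|}$ from the definition of $\rho$, together with the M\"obius-type signs produced by expanding $\prod(q/v_e+1)$ and $\prod(v_e+1)$ --- must be organised so that the two Boolean-lattice cancellations align with the same parametrisation $A = R \cup F_0 \cup T_0$. Once this is arranged correctly the remaining algebra is essentially formal.
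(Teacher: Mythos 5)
Your proposal is correct and follows essentially the same route as the paper: the first identity is the same Boolean-lattice inclusion--exclusion (the paper runs it as an expansion of $\prod(v_e-1)$ terms followed by a change of variables, you run it as a coefficient-of-$m(A)$ extraction, which is the identical computation in reverse), and the second rests on the same key factorisation $\rho(R\cup L, S\setminus K)=\rho(R,R\cup(F\setminus K))\,\rho(R\cup L,R\cup T)/m(R)$, which the paper obtains by applying (A2) term-by-term and you obtain by a slightly more roundabout but equivalent two-step use of (A2). Your closing remark on the integrality of $\rho(R,R\cup(F\setminus K))/m(R)$ via iterated (A1) along a chain in $F$ is also the intended justification.
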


\begin{proof}
Note that 
\begin{align*}
&\sum_{A_1 \subseteq T, A_2 \subseteq F}m(R\cup A_1 \cup A_2)\prod_{e \in F \setminus A_2} (v_e-1) \prod_{e \in A_1}(v_e-1) \\
&=\sum_{A_1 \subseteq T, A_2 \subseteq F} m(R\cup A_1 \cup A_2)\sum_{K \subseteq F \setminus A_2, L \subseteq A_1}\vv^{K}\vv^{L}(-1)^{|A_1|+|A_2|+|F|+|K|+|L|}\\
&= \sum_{K,L} \vv^{K}\vv^{L} \sum_{L \subseteq A_1 \subseteq T, A_2 \subseteq F \setminus K}(-1)^{|A_1|+|A_2|+|F|+|K|+|L|}m(R\cup A_1 \cup A_2)\\
&= \sum_{K\subseteq F,L \subseteq T} \rho(R \cup L, S \setminus K) \vv^{K}\vv^{L}, 
\end{align*}
where $\vv^A= \prod_{e \in A} v_e$. The first formula now follows by a change of variables since $\rk(R \cup K \cup L)= \rk(R)+|K|$. 

The formula for quasi-arithmetic matroids follows from the first using Axiom (A2): If $A_1 \subseteq T\setminus L$ and $A_2 \subseteq F \setminus K$, then 
$$
m(R\cup L \cup A_1 \cup A_2) = \frac {m(R\cup L \cup A_1)m(R \cup A_2)}{m(R)}.
$$
But then 
$$
\rho(R \cup L, S \setminus K)= \frac {\rho(R\cup L, R\cup T)\rho(R, R\cup F \setminus K)}{m(R)},
$$
so the sum factors as claimed. 
\end{proof}

Let us formulate a bivariate version of Lemma \ref{lemma-rho2} for reference.

\begin{lemma}\label{lemma-rho}
Let $[R,S]=[R,R\cup F \cup T]$ be a molecule in a matroid $\MM$, and let $m : E \rightarrow \RR$ be arbitrary. Then
\begin{align*}
& \sum_{A \in [R,S]} m(A) (x-1)^{\rk(S)-\rk(A)} (y-1)^{|A|-\rk(A)}\\
&= \sum_{K\subseteq F,L \subseteq T} \rho(R \cup L, S \setminus K) x^{|K|}y^{|L|}.
\end{align*}
Moreover if $(\MM,m)$ is a  quasi-arithmetic matroid, then 
\begin{align*}
&\sum_{A \in [R,S]} m(A) (x-1)^{\rk(S)-\rk(A)} (y-1)^{|A|-\rk(A)}\\
&=\left(\sum_{K\subseteq F}\frac {\rho(R, R\cup (F\setminus K))}{m(R)}x^{|K|}\right)\left(\sum_{L\subseteq T}\rho(R\cup L, R\cup T) y^{|L|}\right). 
\end{align*}
\end{lemma}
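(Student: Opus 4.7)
The plan is to obtain Lemma \ref{lemma-rho} as a bivariate specialization of Lemma \ref{lemma-rho2}, paralleling the passage (\ref{ZtoM}) from $Z_\cA$ to $M_\cA$. Concretely, I would substitute $q = (x-1)(y-1)$ and $v_e = y-1$ for every $e \in E$ on both sides of the multivariate identity just established, and then multiply through by $(x-1)^{\rk(S)}$ to clear the negative powers.

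On the left-hand side, the weight $q^{-\rk(A)}\prod_{e\in A}v_e$ collapses under this substitution to
$$((x-1)(y-1))^{-\rk(A)}(y-1)^{|A|} = (x-1)^{-\rk(A)}(y-1)^{|A|-\rk(A)},$$
and rescaling by $(x-1)^{\rk(S)}$ converts this to $(x-1)^{\rk(S)-\rk(A)}(y-1)^{|A|-\rk(A)}$, which is exactly the LHS of Lemma \ref{lemma-rho}. On the right-hand side the substitutions are engineered so that $q/v_e+1\mapsto x$ and $v_e+1\mapsto y$, turning the interior products into $x^{|K|}$ and $y^{|L|}$, while the prefactor $q^{-\rk(S)}\prod_{e\in R\cup F}v_e$ reduces, using the molecule identity $\rk(S)=\rk(R)+|F|$, to a pure power of $(x-1)^{-1}$ that is absorbed by the external rescaling. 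What remains is precisely $\sum_{K,L}\rho(R\cup L, S\setminus K)x^{|K|}y^{|L|}$.

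For the quasi-arithmetic refinement, I would apply the same substitution and rescaling to the factored form of Lemma \ref{lemma-rho2}. Each $q/v_e+1$ occurring in the sum over $K\subseteq F$ becomes $x$ and each $v_e+1$ occurring in the sum over $L\subseteq T$ becomes $y$, and the two multiplicative prefactors telescope against $(x-1)^{\rk(S)}$ as above, so the product structure is preserved and the two factors read off exactly as in the statement.

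The main obstacle is not conceptual but purely combinatorial bookkeeping: one must check that the exponents of $(x-1)$ and $(y-1)$ produced by $q^{-\rk(S)}$, by the product $\prod_{e\in R\cup F}v_e$, and by the external rescaling all telescope to leave a clean polynomial in $x$ and $y$ with no surplus factor. If one prefers to bypass Lemma \ref{lemma-rho2} entirely, the same inclusion-exclusion maneuver used there applies directly to the bivariate form: expand $(x-1)^{\rk(S)-\rk(A)}$ and $(y-1)^{|A|-\rk(A)}$ via the binomial theorem as sums over subsets $K\subseteq F\setminus(A\cap F)$ and $L\subseteq A\cap T$, switch the order of summation so that $K$ and $L$ are outermost, and recognize the resulting inner alternating sum as $\rho(R\cup L, S\setminus K)$.
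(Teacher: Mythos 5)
Your approach is exactly the paper's (implicit) approach: the paper offers no separate argument for Lemma~\ref{lemma-rho} but simply ``formulates a bivariate version of Lemma~\ref{lemma-rho2} for reference,'' which is precisely your substitution $q=(x-1)(y-1)$, $v_e=y-1$, followed by rescaling by $(x-1)^{\rk(S)}$. The left-hand side and the interior products $q/v_e+1\mapsto x$, $v_e+1\mapsto y$ collapse exactly as you say.

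However, your claim that the prefactor $q^{-\rk(S)}\prod_{e\in R\cup F}v_e$ reduces ``to a pure power of $(x-1)^{-1}$'' is not correct in general, and this is precisely the bookkeeping you acknowledged as the ``main obstacle'' but did not actually carry out. Under the substitution one gets
\[
q^{-\rk(S)}\prod_{e\in R\cup F}v_e = (x-1)^{-\rk(S)}(y-1)^{|R|+|F|-\rk(S)} = (x-1)^{-\rk(S)}(y-1)^{|R|-\rk(R)},
\]
using $\rk(S)=\rk(R)+|F|$. After multiplying by $(x-1)^{\rk(S)}$ the factor $(y-1)^{|R|-\rk(R)}$ survives, so the identity you obtain is the stated one \emph{times} $(y-1)^{|R|-\rk(R)}$ on the right. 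This surplus factor is $1$ exactly when $R$ is independent. The same thing happens if you run your alternative direct inclusion--exclusion: since $|A|-\rk(A)=|R|-\rk(R)+|A\cap T|$ for $A\in[R,S]$, the exponent of $(y-1)$ on the left picks up $|R|-\rk(R)$ uniformly. So Lemma~\ref{lemma-rho} as printed (and as you re-derive it) tacitly assumes $R$ independent; an arbitrary molecule (e.g.\ $[\{e\},\{e\}]$ with $e$ a loop) gives a counterexample to the stated formula. In all of the paper's applications this is harmless, because Proposition~\ref{bj} always feeds in $R=B\setminus I(B)\subseteq B$, a subset of a basis. You should either add the hypothesis that $R$ is independent, or keep the factor $(y-1)^{|R|-\rk(R)}$ in the statement; asserting that the exponents ``telescope to leave a clean polynomial with no surplus factor'' is exactly the step that fails.
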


The next proposition describes how $2^E$ may be partitioned into molecules. For a proof see \cite{Cr} and  \cite{Bjorn}.

\begin{proposition}\label{bj}
Suppose that $\MM$ is a matroid with ground set $E$ and set of bases $\BB$. Then:
\begin{itemize}
  \item[\emph{i)}] $2^E$ is the disjoint union
$$
2^E= \bigsqcup_{B \in \BB} [B \setminus \IA(B), B \cup \EA(B)];
$$
  \item[\emph{ii)}] for each $B \in \BB$, $[B \setminus \IA(B), B \cup \EA(B)]$ is a molecule with $F=\IA(B)$ and $T=\EA(B)$.
\end{itemize}
\end{proposition}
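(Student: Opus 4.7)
My plan is to handle part (ii) first and then deduce part (i), since the molecule property provides structure that simplifies the covering argument. For (ii), fix a basis $B$ and set $R = B \setminus \IA(B)$, $S = B \cup \EA(B)$, $F = \IA(B)$, and $T = \EA(B)$. Since $\IA(B) \subseteq B$ and $\EA(B) \cap B = \emptyset$, the decomposition $S = R \sqcup F \sqcup T$ is immediate. The substance of the molecule axiom is the rank identity $\rk(A) = \rk(R) + |A \cap F|$ for every $A \in [R,S]$. Because $R \cup (A \cap F) \subseteq B$ is independent, its rank equals $|R| + |A \cap F| = \rk(R) + |A \cap F|$, so it suffices to prove that every element of $\EA(B)$ lies in the closure of $R$. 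This follows from a single circuit-theoretic observation: given $e \in \EA(B)$ and the unique circuit $C \subseteq B \cup \{e\}$, the element $e$ is the minimum of $C$, and any hypothetical $f \in (C \setminus \{e\}) \cap \IA(B)$ would satisfy $f > e$, so basis exchange within $C$ would exhibit $(B \setminus \{f\}) \cup \{e\}$ as a basis, contradicting the internal activity of $f$. Hence $C \setminus \{e\} \subseteq R$, as required.

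For (i), given $A \subseteq E$, the task is to produce a unique basis $B(A) \in \BB$ with $A \setminus B(A) \subseteq \EA(B(A))$ and $B(A) \setminus A \subseteq \IA(B(A))$. The natural construction is greedy in two sweeps of the total order: first scan $E$ from largest to smallest, retaining each encountered element of $A$ whose retention preserves independence, producing a maximal independent set $B_0 \subseteq A$; then scan from smallest to largest, adjoining each encountered element of $E \setminus A$ whose adjunction preserves independence, until $B_0$ is completed to a basis $B(A)$. Each element of $A \setminus B(A)$ was rejected only because elements of $B_0 \subseteq B(A)$ larger than it already made it dependent, hence it is externally active for $B(A)$; dually, each element of $B(A) \setminus A$ was added only after every smaller non-member of $B(A)$ had been shown incapable of replacing it, hence it is internally active. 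This furnishes the required interval containment $A \in [B(A) \setminus \IA(B(A)), B(A) \cup \EA(B(A))]$.

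Uniqueness is the main obstacle. Suppose $B \neq B'$ and $A$ lies in both intervals; let $e$ be the minimum of the symmetric difference $B \triangle B'$ and assume without loss of generality $e \in B \setminus B'$. Splitting on whether $e \in A$: if $e \in A$, then $e \in A \setminus B' \subseteq \EA(B')$, so the unique circuit in $B' \cup \{e\}$ produces some $f > e$ in $B' \setminus B$, and a basis exchange contradicts an activity hypothesis on one of $B, B'$; if $e \notin A$, the dual argument through cocircuits and internal activity applies. The delicate part is to arrange the exchange so that the minimality of $e$ rules out both cases, and this book-keeping is where the main care must be spent.
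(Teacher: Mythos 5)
Your part (ii) is correct and complete: the circuit argument showing that every externally active $e$ has its fundamental circuit contained in $(B\setminus I(B))\cup\{e\}$ (any $f\in C\cap I(B)$ would be exchangeable with the smaller $e$) is exactly what is needed, and the rank identity then follows since $e\in \mathrm{cl}(B\setminus I(B))$ for all $e\in E(B)$. Your two-sweep construction for existence in part (i) also works, though your verification of internal activity has a small hole: for $f\in B(A)\setminus A$ and $g<f$ with $g\notin B(A)$, the second sweep only certifies $g\in\mathrm{cl}(B(A)\setminus\{f\})$ when $g\in E\setminus A$; when $g\in A\setminus B_0$ you must instead invoke the first sweep, which rejected $g$ because $g\in\mathrm{cl}(\{b\in B_0: b>g\})\subseteq\mathrm{cl}(B(A)\setminus\{f\})$. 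That is easily repaired.

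The genuine gap is uniqueness, which you explicitly leave as ``book-keeping''; it is in fact the core of the proof, and the single-exchange scheme you sketch does not close it. Concretely, with $e=\min(B\triangle B')$, $e\in B\setminus B'$ and $e\in A$, the fundamental circuit $C(e,B')$ does contain some $f\in B'\setminus B$ with $f>e$, and the exchange $(B'\setminus\{f\})\cup\{e\}$ contradicts internal activity of $f$ in $B'$ \emph{only if} $f\notin A$; if $f\in A$ you merely learn $f\in E(B)$ and no contradiction arises from minimality of $e$. One must iterate: from $f\in E(B)$ take $g\in C(f,B)\setminus B'$ with $g>f$, and so on, producing a strictly increasing sequence alternating between $B\setminus B'$ and $B'\setminus B$; finiteness forces the sequence to leave $A$ eventually, and only then does the internal-activity contradiction apply (the case $e\notin A$ is handled by the same chain, started from $e\in I(B)$). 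Without this alternating-chain argument (or some substitute, e.g.\ showing any admissible $B$ must coincide with the output of your greedy construction), part (i) is not proved. For comparison, the paper offers no proof at all here --- it cites Crapo and Bj\"orner --- so your direct approach is welcome, but it needs the uniqueness step carried out in full.
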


\begin{theorem}\label{nonneg}
The coefficients of the arithmetic Tutte polynomial $M_\cA(x,y)$ of a pseudo-arithmetic matroid $\cA$ are nonnegative.
Moreover, pseudo-arithmetic matroids are the most general objects closed under deletion and contraction and such that the associated arithmetic Tutte polynomials have nonnegative coefficients.
\end{theorem}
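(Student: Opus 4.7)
The plan is to handle the two assertions separately, both hinging on the bivariate molecule formula of Lemma \ref{lemma-rho}.

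For the first claim (nonnegativity of the coefficients of $M_\cA$), I would start from Proposition \ref{bj}, which partitions $2^E$ along the molecules $[B \setminus \IA(B),\, B \cup \EA(B)]$ indexed by bases $B \in \BB$. The definition of $M_\cA$ then splits as a sum over these molecules. The crucial rank identity is $\rk(B \cup \EA(B)) = \rk(B) = \rk(E)$, because externally active elements lie in the closure of $B$; this is what makes the exponent $\rk(E) - \rk(A)$ appearing in $M_\cA$ coincide with the $\rk(S) - \rk(A)$ required by Lemma \ref{lemma-rho}. Applying that lemma (with $R = B \setminus \IA(B)$, $F = \IA(B)$, $T = \EA(B)$, $S = B \cup \EA(B)$) to each molecule rewrites
\[
M_\cA(x,y) = \sum_{B \in \BB}\ \sum_{K \subseteq \IA(B),\, L \subseteq \EA(B)} \rho\bigl(B \setminus \IA(B) \cup L,\ B \cup \EA(B) \setminus K\bigr)\, x^{|K|} y^{|L|}.
\]
Since a subinterval of a molecule is again a molecule (as noted right after the definition of molecule), each $\rho$ on the right-hand side is nonnegative by axiom (P), yielding nonnegativity of the coefficients.

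For the converse (maximality of the pseudo-arithmetic axiom), I would exhibit each individual $\rho(R,S)$ as a coefficient of an arithmetic Tutte polynomial obtained by deletion and contraction. Suppose $\mathcal{C}$ is any family of pairs $(\MM, m)$ closed under the deletion and contraction operations of Section \ref{last} such that $M_{(\MM,m)}(x,y)$ has nonnegative coefficients for every member. Given $(\MM, m) \in \mathcal{C}$ and a molecule $[R,\, R \cup F \cup T]$ in $\MM$, I would contract every element of $R$ and then delete every element of $E \setminus S$. The resulting pair $(\MM', m')$ lies in $\mathcal{C}$; by Remark \ref{rem-mole} its ground set $F \cup T$ splits into coloops ($F$) and loops ($T$), and by the definitions of contraction and deletion $m'(A) = m(A \cup R)$. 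Applying Lemma \ref{lemma-rho} to the ambient molecule $[\emptyset, F \cup T]$ in $\MM'$ identifies the constant term of $M_{(\MM', m')}(x,y)$ with $\rho'(\emptyset, F \cup T)$, which unwinds via $m'(A) = m(A \cup R)$ to exactly $\rho(R, S)$ in the original pair. Nonnegativity of this constant coefficient is precisely axiom (P), so every member of $\mathcal{C}$ is pseudo-arithmetic.

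The main conceptual step, rather than a technical obstacle, is recognizing that the Crapo partition of Proposition \ref{bj} meshes with Lemma \ref{lemma-rho} only because $\rk(B \cup \EA(B)) = \rk(E)$ for each basis $B$; once this is seen, both directions are essentially structural. A minor bookkeeping point needed for the converse is that pseudo-arithmetic matroids are themselves closed under deletion and contraction so that the class of pseudo-arithmetic matroids is a valid comparison object, but this is asserted in Section \ref{last}.
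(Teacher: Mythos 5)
Your proof is correct and follows the same route as the paper: decompose $2^E$ into Crapo molecules via Proposition \ref{bj}, apply the first equality of Lemma \ref{lemma-rho} to each molecule so that the coefficients are $\rho$-values, invoke axiom (P), and for the converse realize $\rho(R,S)$ as the constant coefficient of the arithmetic Tutte polynomial of the matroid obtained by contracting $R$ and deleting $E\setminus S$. Your explicit remark that $\rk(B\cup \EA(B))=\rk(E)$, needed so that the exponent $\rk(E)-\rk(A)$ in $M_\cA$ matches the $\rk(S)-\rk(A)$ of Lemma \ref{lemma-rho}, is a point the paper leaves implicit but otherwise the two arguments coincide.
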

\begin{proof}
Consider the molecule $[R,S]= [B \setminus \IA(B), B \cup \EA(B)]$.  By the first equality proved in Lemma \ref{lemma-rho} the polynomial
$$
\sum_{A \in [R,S]} m(A) (x-1)^{\rk(S)-\rk(A)} (y-1)^{|A|-\rk(A)}
$$
has nonnegative coefficients. By Proposition \ref{bj}, this implies that $M_\cA(x,y)$ has non-negative coefficients.

For the second statement, notice that if $[R,S]$ is a molecule we may contract the elements in $R$ and delete the elements in
$E \setminus S$. Again by the first equality of Lemma \ref{lemma-rho} we see that the constant coefficient  of the arithmetic Tutte polynomial of the corresponding matroid is $\rho(R,S)$.
\end{proof}

Crapo's formula has been extended in \cite{DM} to arithmetic matroids.
We shall now see how Lemma \ref{lemma-rho2} gives a multivariate version of this, and extends it to  pseudo-arithmetic matroids with integral multiplicities.

If $\cA$ is a pseudo-arithmetic matroid with integral multiplicities, let $\widetilde{\mathcal{B}}$ be the multi-set with elements of the form $(B,C)$ where $B$ is a basis, 
$C \subseteq E(B) \cup I(B)$, and where the pair 
$(B,C)$ appears exactly 
$$\rho\big( (B\cup C)\setminus I(B)\, , \, (B\setminus C) \cup E(B)\big)$$ times.   

\begin{theorem}\label{MAC}If $\cA$ is a pseudo-arithmetic matroid with integral multiplicities, then
$$
Z_\cA(q,\vv)= q^{-\rk(E)}\!\!\!\sum_{(B,C)\in
\widetilde{\mathcal{B}}}\prod_{b\in B}v_b \!\!\!\prod_{e\in E(B,C)}\!\!\!(v_e+1)\!\!\!\prod_{i\in I(B,C)}\!\!\!\left(\frac{q}{v_i}+1\right), 
$$
where $E(B, C)= E(B)\cap C$ and $I(B, C)= I(B)\cap C$. 

In particular, 
$$M_\cA(x,y)=\sum_{(B,C)\in
\widetilde{\mathcal{B}}}x^{i(B,C)}y^{e(B,C)}, $$
where $e(B,C)=|E(B,C)|$ and $i(B,C)=|I(B,C)|$.
\end{theorem}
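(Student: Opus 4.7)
The plan is to combine the molecule partition of $2^E$ from Proposition \ref{bj} with the first identity of Lemma \ref{lemma-rho2}, one molecule at a time.

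Fix a basis $B \in \BB$ and apply Lemma \ref{lemma-rho2} to the molecule $[R,S] = [B \setminus \IA(B),\, B \cup \EA(B)]$. Then $F = \IA(B)$, $T = \EA(B)$, $R \cup F = B$, and $\rk(S) = \rk(E)$, so the contribution of this molecule to $Z_\cA(q,\vv)$ is $q^{-\rk(E)}\bigl(\prod_{b \in B} v_b\bigr)$ times a sum indexed by pairs $(K,L)$ with $K \subseteq \IA(B)$ and $L \subseteq \EA(B)$, whose $(K,L)$-term carries the coefficient $\rho(R\cup L,\, S\setminus K)$ and the factor $\prod_{i\in K}(q/v_i+1)\prod_{e\in L}(v_e+1)$.

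Now reparametrize by $C := K \cup L$. Since $\IA(B) \subseteq B$ and $\EA(B) \subseteq E\setminus B$ are disjoint, this is a bijection onto subsets $C \subseteq \IA(B) \cup \EA(B)$, and under it $K = \IA(B) \cap C = I(B,C)$ and $L = \EA(B) \cap C = E(B,C)$. A short set-theoretic check, using $K\cap\EA(B)=\emptyset$ and $L\cap B=\emptyset$, gives $R \cup L = (B\cup C)\setminus \IA(B)$ and $S\setminus K = (B\setminus C) \cup \EA(B)$, so the weight $\rho(R\cup L,\, S\setminus K)$ equals exactly the multiplicity with which $(B,C)$ appears in $\widetilde{\mathcal{B}}$. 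Summing the resulting expression over all bases $B$ and invoking Proposition \ref{bj}(i) yields the first identity of the theorem.

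For the bivariate statement, specialize via \eqref{ZtoM} by setting $q = (x-1)(y-1)$ and $v_e = y-1$ for all $e$: then $v_e+1 \mapsto y$, $q/v_i+1 \mapsto x$, and $\prod_{b\in B} v_b \mapsto (y-1)^{\rk(E)}$, so after multiplication by $(x-1)^{\rk(E)}$ all prefactors cancel and one recovers $M_\cA(x,y) = \sum_{(B,C)\in\widetilde{\mathcal{B}}} x^{i(B,C)} y^{e(B,C)}$. The only real obstacle is the set-theoretic bookkeeping identifying $(R\cup L,\, S\setminus K)$ with $\bigl((B\cup C)\setminus \IA(B),\, (B\setminus C)\cup \EA(B)\bigr)$; everything else is direct substitution into Lemma \ref{lemma-rho2} and Proposition \ref{bj}.
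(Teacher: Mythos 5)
Your proposal is correct and fills in exactly the details the paper leaves implicit (the paper's proof is simply a citation of Proposition \ref{bj} and Lemmas \ref{lemma-rho2} and \ref{lemma-rho}). The molecule decomposition and the reparametrization $C = K \cup L$, together with the verification that $\rho(R\cup L,\,S\setminus K)$ matches the multiplicity in $\widetilde{\mathcal{B}}$, is precisely the intended argument; the only cosmetic difference is that you obtain the bivariate statement by specializing the multivariate identity via \eqref{ZtoM} rather than re-invoking Lemma \ref{lemma-rho} directly, which is an equivalent route.
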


\begin{proof}
The proof follows straightforwardly from  Proposition \ref{bj}, Lemma \ref{lemma-rho2} and  Lemma~\ref{lemma-rho}. 
\end{proof}

For ordinary matroids we have the following consequence of Theorem \ref{MAC}. 
\begin{corollary}\label{Cra-Sok}
If $\MM$ is a matroid, then 
$$
Z_\MM(q,\vv)= q^{-\rk(E)}\!\sum_{B\in
\mathcal{B}}\prod_{b\in B}v_b \!\!\!\prod_{e\in E(B)}\!\!\!(v_e+1)\!\!\!\prod_{i\in I(B)}\!\!\!\left(\frac{q}{v_i}+1\right). 
$$
\end{corollary}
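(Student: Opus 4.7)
The plan is to view an ordinary matroid $\MM$ as a pseudo-arithmetic matroid with trivial multiplicity function $m \equiv 1$, and then specialize Theorem \ref{MAC}. First I would observe that axiom (P) is trivially verified for $m \equiv 1$: for any molecule $[R,S]$ with $S = R \cup F \cup T$, the defining sum
$$
\rho(R,S) = (-1)^{|T|} \sum_{A \in [R,S]} (-1)^{|S|-|A|} = (-1)^{|T|}(1-1)^{|S|-|R|}
$$
vanishes whenever $S \supsetneq R$ and equals $1$ when $S = R$. Thus $\MM$ (with $m \equiv 1$) is indeed pseudo-arithmetic and Theorem \ref{MAC} applies.

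Next I would unpack which pairs $(B,C)$ contribute a nonzero multiplicity to the multi-set $\widetilde{\BB}$. By the previous paragraph, the pair $(B,C)$ appears with nonzero multiplicity iff the interval
$$
\bigl[(B\cup C)\setminus I(B)\, ,\, (B\setminus C) \cup E(B)\bigr]
$$
is a singleton, i.e.\ iff its lower and upper endpoints coincide. Writing $C_E = C \cap E(B)$ and $C_I = C \cap I(B)$, the lower endpoint is $(B \setminus I(B)) \cup C_E$ and the upper endpoint is $(B \setminus C_I) \cup E(B)$. Comparing the parts lying inside $B$ forces $C_I = I(B)$, and comparing the parts outside $B$ forces $C_E = E(B)$. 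Hence the only surviving pair for each basis $B$ is $(B, I(B) \cup E(B))$, with multiplicity $1$.

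For this pair we have $E(B,C) = E(B) \cap C = E(B)$ and $I(B,C) = I(B) \cap C = I(B)$, so substituting into the formula of Theorem \ref{MAC} immediately yields
$$
Z_\MM(q,\vv) = q^{-\rk(E)} \sum_{B \in \mathcal{B}} \prod_{b\in B} v_b \prod_{e \in E(B)}(v_e+1) \prod_{i \in I(B)}\left(\frac{q}{v_i}+1\right),
$$
as claimed. (An equivalent route, should one prefer to avoid invoking Theorem \ref{MAC}, is to apply Proposition \ref{bj} to partition $2^E$ and then use the first identity of Lemma \ref{lemma-rho2} with $m \equiv 1$ on each molecule; the same vanishing of $\rho$ collapses the inner double sum to a single term.)

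There is no real obstacle here: the argument is a straightforward specialization, and the only point requiring care is the bookkeeping that identifies when the collapsed interval endpoint condition forces $C = I(B) \cup E(B)$.
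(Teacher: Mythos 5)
Your proof is correct and follows exactly the route the paper intends: the corollary is stated as a direct consequence of Theorem \ref{MAC}, obtained by taking $m\equiv 1$ (a pseudo-arithmetic multiplicity, since $\rho(R,S)$ then vanishes unless $R=S$, where it equals $1$), so that the only surviving pairs in $\widetilde{\mathcal{B}}$ are $(B, I(B)\cup E(B))$ with multiplicity $1$. Your bookkeeping verifying this collapse is accurate, and your parenthetical alternative via Proposition \ref{bj} and Lemma \ref{lemma-rho2} is just the same argument one level down.
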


We define the \textit{external activity polynomial} of a basis
$B\subseteq E$ to be
$$
E_B(y):=\sum_{T\supseteq B}\rho(T)y^{e(B,T)}, 
$$
where $\rho(T) := \rho(T,E)$ and $e(B,T):= |T \cap E(B)|$. 

As a consequence of Lemma \ref{lemma-rho}, we get a simpler proof of a theorem proved in \cite[Section 4]{DM}.

\begin{theorem} \label{sez4}
The arithmetic Tutte polynomial of an arithmetic matroid $\cA$ decomposes as
$$
M_\cA(x,y)=\mathop{\sum_{B\in \BB}}\frac{1}{m(B)}E^*_{B^c}(x) E_{B}(y),
$$
where $E^*_{B^c}(x)$ is the external activity polynomial of the basis $B^c$ in the dual arithmetic matroid. 
\end{theorem}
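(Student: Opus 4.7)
The plan is to apply the Crapo partition $2^E=\bigsqcup_{B\in\BB}[B\setminus I(B), B\cup E(B)]$ from Proposition \ref{bj} to $M_\cA(x,y)$, invoke the second (quasi-arithmetic) formula of Lemma \ref{lemma-rho} on each molecule, and identify the two resulting factors with $E^*_{B^c}(x)$ and $E_B(y)$ up to the single prefactor $1/m(B)$. Setting $R:=B\setminus I(B)$, $F:=I(B)$, $T:=E(B)$, and noting that $\rk(S)=\rk(B)=\rk(E)$, Lemma \ref{lemma-rho} rewrites the $B$-contribution as
\begin{equation*}
\frac{1}{m(R)}\left(\sum_{K\subseteq I(B)}\rho(R, B\setminus K)\, x^{|K|}\right)\!\!\left(\sum_{L\subseteq E(B)}\rho(R\cup L, R\cup E(B))\, y^{|L|}\right).
\end{equation*}

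For the $y$-factor, I would apply axiom (A2) to the sub-molecules $[R, B\cup L\cup A']$ (for $L\subseteq E(B)$ and $A'\subseteq E(B)\setminus L$, where $F''=I(B)$ and $T''=L\cup A'$) to obtain $m(R\cup L\cup A')=\frac{m(R)}{m(B)}\, m(B\cup L\cup A')$, whence $\rho(R\cup L, R\cup E(B))=\frac{m(R)}{m(B)}\,\rho(B\cup L, B\cup E(B))$. Independently, starting from $E_B(y)=\sum_{T'\supseteq B}\rho(T',E)\, y^{|T'\cap E(B)|}$ and swapping sums, a standard M\"obius-style cancellation using $\sum_{W\subseteq S}(-1)^{|W|}=0$ for $S\neq\emptyset$ forces $T'\subseteq B\cup E(B)$, and the definition collapses to $E_B(y)=\sum_{L\subseteq E(B)}\rho(B\cup L, B\cup E(B))\, y^{|L|}$. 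The $y$-factor therefore equals $\frac{m(R)}{m(B)}\, E_B(y)$.

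The $x$-factor needs no further axiom, only parallel combinatorics. A routine binomial rearrangement of $\rho(R, B\setminus K)$ brings $\sum_{K}\rho(R, B\setminus K)\, x^{|K|}$ into the closed form $\sum_{J\subseteq I(B)} m(B\setminus J)(x-1)^{|J|}$; while unfolding $E^*_{B^c}(x)=\sum_{T'\supseteq B^c}\rho^*(T',E)\, x^{|T'\cap I(B)|}$ via $m^*(\cdot)=m(E\setminus\cdot)$, substituting $U'=E\setminus T'\subseteq B$, and applying the same cancellation over $W\subseteq (B\setminus I(B))\setminus U'$ forces $U'\supseteq B\setminus I(B)$ and produces the same closed form. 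Hence the $x$-factor is exactly $E^*_{B^c}(x)$, the $B$-molecule contribution becomes $\frac{1}{m(B)}\, E^*_{B^c}(x)\, E_B(y)$, and summing over $B\in\BB$ gives the theorem. The main obstacle I anticipate is the bookkeeping on the dual side: translating $\rho^*(T',E)$ and the dual activities $E^*(B^c)=I(B)$, $I^*(B^c)=E(B)$ into primal quantities so that the two M\"obius cancellations align, and keeping straight that axiom (A2) is needed only on the $y$-side (to pass between $m(R\cup\cdot)$ and $m(B\cup\cdot)$) while the $x$-side is pure inclusion-exclusion on $\rho$.
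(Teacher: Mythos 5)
Your proposal is correct and follows essentially the same route as the paper: decompose $2^E$ via Proposition \ref{bj}, apply the quasi-arithmetic (second) formula of Lemma \ref{lemma-rho} to each molecule $[B\setminus I(B),\, B\cup E(B)]$, and identify the two factors with $E^*_{B^c}(x)$ and $\tfrac{m(R)}{m(B)}E_B(y)$ via axiom (A2) and inclusion--exclusion; the paper leaves exactly these two identifications ``to the reader,'' and your (A2) reduction and M\"obius cancellations carry them out correctly (both sides of your $x$-identity indeed equal $\sum_{J\subseteq I(B)} m(B\setminus J)(x-1)^{|J|}$). The only quibble is cosmetic: in the dual-side cancellation it is the inner inclusion--exclusion variable (the set $C=E\setminus A$ appearing in $\rho^*$) that is forced to contain $B\setminus I(B)$, not directly $U'=E\setminus T'$, but this does not affect the conclusion.
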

\begin{proof}
Consider the molecule  $[R,R \cup F \cup T]$ where $R= B \setminus I(B)$, $F=I(B)$ and $T=E(B)$. 
By an elementary but somewhat tedious computation which is left to the reader one sees that 
$$
\frac {m(R)}{m(B)}E_B(y)= \sum_{L\subseteq T}\rho(R\cup L, R\cup T) y^{|L|}
$$
and 
$$
E^*_{B^c}(x)=\sum_{K\subseteq F} {\rho(R, R\cup (F\setminus K))}x^{|K|}.
$$
Hence the proof follows from Proposition \ref{bj} and Lemma \ref{lemma-rho}. 
\end{proof}

Furthermore, if $\cA$ is representable, the polynomials $E_B(y)$ and $E^*_{B^c}(x)$ have nice combinatorial interpretations, which we will give in Section \ref{geom-repr}.

\bigskip
\section{Representable arithmetic matroids}\label{rep}
Let $\LL=(g_e)_{e \in E}$ be a finite list of vectors in a finitely generated abelian group
$G$. By a \emph{list} we will always mean an element $(g_e)_{e \in E}$ of $G^E$, and we do not require $E$ to be totally ordered.  Recall that a finitely generated abelian group $G$ is isomorphic to $G_f\oplus G_t$,
where $G_t$ is finite and
$G_f$ is free abelian, i.e., it is isomorphic to $\mathbb{Z}^r$ for
some $r\geq 0$. Then $G_t$ is called the \emph{torsion} of $G$ and $r:=\rk(G)$ is the \emph{rank} of $G$. A matroid with ground set $E$ is naturally defined by its rank function $\rk : 2^E \rightarrow \NN$ defined by $\rk(A)= \rk(\langle \LL_A \rangle )$, where $\LL_A= (g_e)_{e \in A}$ and $\langle \LL_A \rangle$ is the subgroup generated by $\LL_A$. In addition to the matroid structure, $\LL$ carries arithmetic information which is encoded as multiplicities.
For $A\subseteq E$, let $H_A$ be the maximal subgroup of $G$
such that $\langle \LL_A\rangle\leq H_A$ and $|H_A: \langle
\LL_A\rangle|<\infty$, where $|H_A: \langle \LL_A\rangle|$ denotes the
index (as subgroup) of $\langle \LL_A\rangle$ in $H_A$. The
multiplicity $m(A)$ is defined as $m(A):=|H_A: \langle \LL_A\rangle|$.  For our purposes it is natural to express the multiplicity in other terms:  Let $G_A := \big( G/\langle \LL_A \rangle\big)_t$ be the torsion subgroup of $G/\langle \LL_A \rangle$, and define $m(A) := |G_A|$. It is straightforward to see that the two definitions agree. 

The fact that the multiplicity function just described satisfies the original five axioms for an arithmetic matroid (and hence our axioms) has been verified in \cite{DM}. It also follows from the results in this paper, namely Lemma \ref{grr} and \ref{molequot} and Section \ref{geom-repr}.
\begin{remark}\emph{The most familiar situation is when $G=\ZZ^n$. However if we want to allow for deletion and contraction (see Section \ref{last}) we need the above more general setup.}
\end{remark}

We will denote by $\MM_\LL$ the matroid determined by $\LL$, and by $\A_\LL = (\MM_\LL, m)$ the \emph{arithmetic matroid} determined by $\LL$. We say that an arithmetic matroid is \emph{representable} if it comes from such a list.

\begin{lemma}\label{grr}
Let $A \subset E$ and $e \in E \setminus A$. 
If $\rk(A) < \rk(A\cup\{e\})$, then there exists a group monomorphism $G_A \hookrightarrow G_{A\cup \{e\}}$.
If $\rk(A) = \rk(A\cup\{e\})$, then there exists a group epimorphism $G_A \twoheadrightarrow G_{A\cup \{e\}}$.
\end{lemma}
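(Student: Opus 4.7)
The plan is to exploit the natural surjection
$\pi\colon G/\langle \LL_A \rangle \twoheadrightarrow G/\langle \LL_{A\cup\{e\}} \rangle$
coming from the inclusion $\langle \LL_A\rangle \le \langle \LL_{A\cup\{e\}}\rangle$, and then restrict it to torsion subgroups. The kernel of $\pi$ is precisely the cyclic subgroup $\langle \overline{g}_e \rangle$ generated by the class of $g_e$ modulo $\langle \LL_A\rangle$, and the dichotomy in the statement corresponds exactly to the dichotomy on the order of $\overline{g}_e$: since $\rk(A\cup\{e\})-\rk(A)$ equals the free rank of $\langle \overline{g}_e\rangle$, we have $\rk(A)<\rk(A\cup\{e\})$ iff $\overline{g}_e$ has infinite order, and $\rk(A)=\rk(A\cup\{e\})$ iff $\overline{g}_e$ is torsion.

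Since any homomorphism of abelian groups sends torsion to torsion, $\pi$ restricts to a homomorphism $\pi_t\colon G_A \to G_{A\cup\{e\}}$. In the first case, $\ker\pi \cong \ZZ$ is torsion-free, so $\ker\pi \cap G_A = 0$ and $\pi_t$ is injective. In the second case, $\ker\pi = \langle \overline{g}_e\rangle$ is finite and hence contained in $G_A$; given any $\overline{y}\in G_{A\cup\{e\}}$, I would choose a lift $x\in G/\langle\LL_A\rangle$, observe that $nx\in\ker\pi$ for some $n>0$ (because $\overline{y}$ is torsion), and conclude from the finiteness of $\ker\pi$ that $x$ itself is torsion. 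Thus $x\in G_A$ and $\pi_t(x)=\overline{y}$, proving surjectivity.

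The whole argument is essentially a diagram chase on the short exact sequence
\[
0 \longrightarrow \langle \overline{g}_e\rangle \longrightarrow G/\langle \LL_A\rangle \xrightarrow{\ \pi\ } G/\langle \LL_{A\cup\{e\}}\rangle \longrightarrow 0,
\]
so there is no serious obstacle. The only point requiring a little care is the routine translation between ``rank increases'' and ``$\overline{g}_e$ has infinite order'', which follows from comparing the structure theorem applied to $G/\langle \LL_A\rangle$ and to its quotient by the subgroup generated by $\overline{g}_e$.
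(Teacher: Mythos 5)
Your proof is correct and takes essentially the same approach as the paper's: both analyze the natural surjection between the quotient groups, restrict it to torsion subgroups, and split into cases according to whether the image of $g_e$ is torsion (the paper just reduces to $A=\emptyset$ first for notational convenience, while you work with $G/\langle \LL_A\rangle$ throughout). Your version makes the dictionary between the rank condition and the order of $\overline{g}_e$ a bit more explicit, but the substance is identical.
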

\begin{proof}
Since
$$
G/\langle \LL_{A} \cup \{g_e\}\rangle \cong (G/ \langle \LL_{A} \rangle)/ \langle g_e +  \langle \LL_{A} \rangle\rangle,
$$
we may assume that $A = \emptyset$. Let $\pi : G \rightarrow G/ \langle g_e \rangle$ be the natural projection. Then $\pi$ is a homomorphism between the torsion parts.

Suppose that $g_e$ is not a torsion element.  If $t \in G$ is a torsion element for which $\pi(t)= 0$, then $t \in \langle g_e \rangle$ so that $t=mg_e$ for some $m \in \ZZ$. Since $g_e$ is not a torsion element $m=0$, and hence $t=0$. Hence $\pi$ is an injection when restricted to the torsion part.

Suppose that $g_e$ is a torsion element. The torsion part of $G / \langle g_e \rangle$ consists of all elements of the form $g+\langle g_e \rangle$ where $mg \in \langle g_e \rangle$ for some $m \in \ZZ$. But $mg \in \langle g_e \rangle$ for some $m \in \ZZ$ if and only if $g$ is a torsion element. Hence $\pi$ is an epimorphism between the torsion parts.
\end{proof}

Furthermore, if $A\subseteq B\subseteq E$ are such that $\rk(A)=\rk(B)$, then the composition morphism $G_A \twoheadrightarrow G_B$ does not depend on the order chosen on $B\setminus A$. In the same way, if $\rk^*(A)=\rk^*(B)$, the composition morphism $G_A \hookrightarrow G_B$ does not depend on the order chosen on $B\setminus A$.
Then for every molecule $[R,S]$ we have a commutative diagram
\begin{equation*}
  \xymatrix@R+2em@C+2em{
  G_{R} \ar[r] \ar[d] & G_{R\cup F} \ar[d] \\
  G_{R\cup T} \ar[r] & G_{R\cup F\cup T}
  }
 \end{equation*}
in which the horizontal arrows are injective and the vertical arrows are surjective.

\begin{lemma}\label{molequot}
For each molecule $[R, R\cup F \cup T]$
$$
\frac {G_{R\cup F}}{G_R} \cong \frac {G_{R\cup F \cup T}}{G_{R\cup T}}.
$$
\end{lemma}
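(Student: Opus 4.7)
The plan is to apply the snake lemma to the commutative diagram of short exact sequences
\begin{equation*}
\xymatrix@C+1em{0 \ar[r] & G_R \ar[r] \ar[d] & G_{R\cup F} \ar[r] \ar[d] & G_{R\cup F}/G_R \ar[r] \ar[d]^{\gamma} & 0 \\ 0 \ar[r] & G_{R\cup T} \ar[r] & G_{R\cup F\cup T} \ar[r] & G_{R\cup F\cup T}/G_{R\cup T} \ar[r] & 0}
\end{equation*}
built from the commutative square displayed just above the statement. The horizontal injections come from the rank-increasing case of Lemma~\ref{grr} applied iteratively to the elements of $F$, and the first two vertical surjections come from the rank-preserving case applied to $T$. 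Since these vertical maps are onto, their cokernels vanish, so the snake lemma collapses to the assertion that $\gamma$ is surjective and $\ker \gamma$ is the cokernel of the induced map between the kernels of the first two vertical arrows. Hence it is enough to show that this induced map between kernels is itself surjective.

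To unpack those kernels, abbreviate $H_A := \langle \LL_A \rangle$ and use the natural projection $G/H_R \twoheadrightarrow G/H_{R\cup T}$, whose kernel is $H_{R\cup T}/H_R$. Because $[R,R\cup T]$ is a sub-molecule in which no element of $T$ increases rank, the quotient $H_{R\cup T}/H_R$ is a torsion subgroup of $G/H_R$, so it is contained in $G_R=(G/H_R)_t$ and equals $\ker(G_R \twoheadrightarrow G_{R\cup T})$. The same argument identifies $\ker(G_{R\cup F}\twoheadrightarrow G_{R\cup F\cup T})$ with $H_{R\cup F\cup T}/H_{R\cup F}$, and the induced map between kernels is the obvious one sending $h+H_R \mapsto h+H_{R\cup F}$.

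What remains is the purely group-theoretic identity $H_{R\cup F\cup T} = H_{R\cup F} + H_{R\cup T}$: both sides equal the subgroup of $G$ generated by $\LL_R \cup \LL_F \cup \LL_T$. Writing any $h \in H_{R\cup F\cup T}$ as $h = h_F + h_T$ with $h_F \in H_{R\cup F}$ and $h_T \in H_{R\cup T}$ gives $h + H_{R\cup F} = h_T + H_{R\cup F}$, so the induced map on kernels is surjective; by the snake lemma $\ker \gamma = 0$ and $\gamma$ is an isomorphism. I do not expect a genuine obstacle; the only point requiring care is the identification of $\ker(G_R \to G_{R\cup T})$ with the full quotient $H_{R\cup T}/H_R$, which rests precisely on the molecule hypothesis forcing rank invariance along $T$.
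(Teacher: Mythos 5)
Your proof is correct, and it takes a genuinely different route from the paper's. The paper first reduces to $R=\emptyset$, then directly computes the kernel of the composite epimorphism $G_F \twoheadrightarrow G_{F\cup T} \twoheadrightarrow G_{F\cup T}/G_T$ and identifies it with $G_\emptyset$ by an element-chase (an element $g+\langle\LL_F\rangle$ lies in the kernel iff $mg\in\langle\LL_T\rangle$ for some $m$, iff $g$ is torsion, using that all elements of $T$ are loops). Your argument instead builds the two-row diagram of short exact sequences supplied by Lemma~\ref{grr} and the commutative square in the text, invokes the snake lemma to reduce the claim to surjectivity of the induced map $H_{R\cup T}/H_R \to H_{R\cup F\cup T}/H_{R\cup F}$ between the kernels of the vertical epimorphisms, and then settles that by the transparent identity $H_{R\cup F\cup T} = H_{R\cup F} + H_{R\cup T}$. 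The two proofs isolate the molecule hypothesis at the same spot (it is what makes $H_{R\cup T}/H_R$ a torsion subgroup of $G/H_R$, hence contained in $G_R$, so that the kernel of $G_R\twoheadrightarrow G_{R\cup T}$ is the \emph{entire} group $H_{R\cup T}/H_R$); but your version replaces the paper's slightly delicate coset-chasing with a routine diagram chase whose terminal step is purely about group generation, which some readers will find cleaner, at the cost of invoking more homological-algebra machinery. You also avoid the reduction to $R=\emptyset$ altogether, handling the general interval $[R,R\cup F\cup T]$ directly.
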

\begin{proof}
As in Lemma \ref{grr} we may assume that $R=\emptyset$. By Lemma \ref{grr} we have an epimorphism $G_{F} \twoheadrightarrow G_{F \cup T}$
and hence an epimorphism $$\phi : G_{F} \twoheadrightarrow G_{F \cup T}/{G_{T}}.$$ Now $g + \langle \LL_F\rangle \in \ker(\phi)$ if and only if $mg \in \langle \LL_T \rangle$
if and only if $g$ is a torsion element, that is, $g \in G_\emptyset$. Hence $G_F/G_{\emptyset} \cong G_{F \cup T}/{G_{T}}$.
\end{proof}

\bigskip
\section{A geometric interpretation of the arithmetic Tutte polynomial}\label{geom-repr}
In this section the arithmetic matroids are representable. Let $[R,R\cup F\cup T]$ be a molecule. Here we give geometric interpretations of the polynomials
$$
f_{[R,R\cup F]}(x):=\sum_{K\subseteq F}\frac{\rho(R, R\cup (F\setminus K))}{m(R)}x^{|K|}
$$
and 
$$
g_{[R, R\cup T]}(x):= \sum_{L\subseteq T}\rho(R\cup L, R\cup T) x^{|L|}.
$$
By Lemma \ref{lemma-rho} and Proposition \ref{bj} this gives a geometric interpretation of the arithmetic Tutte polynomial (see Theorem \ref{geom-int}). 

It is not hard to see that there is a list of vectors $\LL' = (f_e)_{e \in F} \subset \ZZ^n$ such that 
$$
\frac G {\langle \LL_{R\cup K} \rangle} \cong \frac {\ZZ^n} {\langle \LL'_{K} \rangle} \oplus G_R,
$$
for all $K \subseteq F$.  Hence  $$\rho(R, R \cup K)= m(R) \rho'(\emptyset, K),$$ where the accent refers to the list $\LL'$ in $\ZZ^n$.  Let $\ZT(K)$ be the zonotope generated by $(f_e)_{e \in K}$, i.e., 
$$
 \ZT(K) := \left\{ \sum_{e \in E} t_e f_e : 0 \leq t_e \leq 1 \mbox{ for all } e \in E\right\}. 
 $$

By e.g. Proposition \ref{Ehrart},
$\rho'(\emptyset, K)$ counts the number of interior points in $\ZT(K)$. Let now 
$\PP[R,R\cup F]$ be the set of integer points in  the semi-open zonotope $$\left\{ \sum_{e \in F} t_e f_e : 0\leq t_e <1 \mbox{ for all } e \in E\right\} ,$$ and let 
$\iota(p)$, where $p \in \PP[R,R\cup F]$ be defined as the number of $e \in F$ for which $t_e=0$ in $p= \sum_{e \in F} t_e f_e$.  
\begin{lemma}
Let $[R,R\cup F]$ be a molecule and let $\PP= \PP[R,R\cup F]$ be as above. Then 
$$
f_{[R,R\cup F]}(x)= \sum_{p \in \PP} x^{\iota(p)}.
$$
\end{lemma}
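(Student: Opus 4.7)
The plan is to show that both sides equal $\sum_{J\subseteq F} N_J\, x^{|F|-|J|}$, where $N_J$ is the number of integer points in the relative interior of the zonotope $\ZT(J)$. For the left-hand side, I would use the list $\LL'=(f_e)_{e\in F}\subset \ZZ^n$ constructed just before the lemma: the identity $\rho(R, R\cup(F\setminus K))/m(R)=\rho'(\emptyset, F\setminus K)$ together with Proposition~\ref{Ehrart} says that this ratio equals the number $N_{F\setminus K}$ of integer points in the relative interior of $\ZT(F\setminus K)$. The change of variable $J:=F\setminus K$ then immediately yields
\begin{equation*}
f_{[R,R\cup F]}(x)=\sum_{J\subseteq F} N_J\, x^{|F|-|J|}.
\end{equation*}

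For the right-hand side, I would first observe that the molecule hypothesis forces $\rk(R\cup K)=\rk(R)+|K|$ for every $K\subseteq F$, so the vectors $(f_e)_{e\in F}$ are $\QQ$-linearly independent in $\ZZ^n$. Consequently every $p\in\PP$ has a \emph{unique} expansion $p=\sum_{e\in F} t_e f_e$ with $0\le t_e<1$, so the subset $J(p):=\{e\in F: t_e>0\}$ is well defined and satisfies $\iota(p)=|F|-|J(p)|$. Partitioning $\PP=\bigsqcup_{J\subseteq F}\PP_J$ according to $J(p)$, and identifying $\PP_J$ with the integer points of the relatively open zonotope $\{\sum_{e\in J} t_e f_e: 0<t_e<1\}$ (which, once more by linear independence of $(f_e)_{e\in J}$, is the relative interior of $\ZT(J)$, with the $J=\emptyset$ case contributing the single point $0$), gives $|\PP_J|=N_J$. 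Grouping the sum $\sum_{p\in\PP} x^{\iota(p)}$ by $J(p)$ therefore produces $\sum_{J\subseteq F} N_J\, x^{|F|-|J|}$, matching the expression above.

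The main obstacle is the stratification step: one must verify both that the half-open zonotope on $F$ decomposes disjointly into relatively open zonotopes indexed by subsets $J\subseteq F$, and that each stratum's integer count equals $\rho'(\emptyset, J)$ as computed by Proposition~\ref{Ehrart}. Both facts hinge on the $\QQ$-linear independence of $(f_e)_{e\in F}$ furnished by the molecule assumption; once this is in hand, the two expressions align through the elementary reindexing $K\leftrightarrow J=F\setminus K$, completing the argument.
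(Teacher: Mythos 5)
Your proof is correct and essentially coincides with the paper's. Both stratify the half-open zonotope according to which coordinates vanish, identify each stratum with the interior lattice points of a smaller zonotope (counted by $\rho'$ via Proposition~\ref{Ehrart}), and reindex by $J=F\setminus K$; you simply make explicit the linear independence of $(f_e)_{e\in F}$ that the paper's one-line proof leaves implicit.
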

\begin{proof}
Each point $p \in \PP[R,R\cup F]$ is an interior point in $\ZT(F \setminus K)$ for a unique set $K$. The number of zero coordinates is then $|K|$. 
\end{proof}

Let $\mathcal{T}(R\cup T)$ be the generalized toric arrangement defined by $R\cup T$; this is a set of subgroups (hence submanifolds) in the abelian compact Lie group $\Hom(G, \mathbb{S}^1)$. Namely, we have one subgroup $H_e$ for each $e\in R\cup T$, having codimension $0$ if $e$ is torsion and $1$ otherwise. Let $\CT(R)$ be the set of connected components of the subgroup $\cap_{e\in R}H_e$.
Define a function
$\eta: \CT(R)\rightarrow \NN$ as follows: For each $c\in \CT(R)$, let $\eta(c)$ be the number of elements $e\in T$ such that $H_e\supseteq c$.

\begin{lemma}
With definitions as above, 
$$
g_{[R, R\cup T]}(x)= \sum_{c \in \CT(R)} x^{\eta(c)}.
$$
\end{lemma}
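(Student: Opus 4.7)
The plan is to rewrite $g_{[R,R\cup T]}(x)$ via M\"obius inversion as a sum of multiplicities $m(R\cup A)$, then use the toric arrangement interpretation of these multiplicities together with the loop property of $T$ to recognize the resulting sum as $\sum_{c\in\CT(R)}x^{\eta(c)}$.

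\medskip

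Specializing the first identity of Lemma~\ref{lemma-rho2} to the submolecule $[R,R\cup T]$ (which has empty $F$-part) and setting every $v_e$ equal to $x-1$, one obtains
\[
g_{[R,R\cup T]}(x)\;=\;\sum_{A\subseteq T}m(R\cup A)\,(x-1)^{|A|}.
\]
Next, under the canonical isomorphism $\bigcap_{e\in B}H_e\cong\Hom(G/\langle\LL_B\rangle,\mathbb{S}^1)$, the multiplicity $m(R\cup A)$ equals the number of connected components of $K\cap\bigcap_{e\in A}H_e$, where $K:=\bigcap_{e\in R}H_e$. The molecule hypothesis says every $e\in T$ is a loop after contracting $R$, i.e.\ $g_e+\langle\LL_R\rangle$ is torsion in $G/\langle\LL_R\rangle$; this forces the character on $K$ given by evaluation at $g_e$ to be trivial on the identity component $K_0$, so $K_0\subseteq H_e$. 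Consequently $K_0$ is also the identity component of $K\cap\bigcap_{e\in A}H_e$ for every $A\subseteq T$, and the connected components of the latter are exactly those cosets $c\in\CT(R)$ that are contained in $H_e$ for all $e\in A$. Writing $T_c:=\{e\in T:H_e\supseteq c\}$, so that $\eta(c)=|T_c|$, this yields
\[
m(R\cup A)\;=\;\#\{c\in\CT(R):A\subseteq T_c\}.
\]

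\medskip

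Substituting into Step~1 and swapping the two sums gives
\[
g_{[R,R\cup T]}(x)=\sum_{c\in\CT(R)}\sum_{A\subseteq T_c}(x-1)^{|A|}=\sum_{c\in\CT(R)}x^{|T_c|}=\sum_{c\in\CT(R)}x^{\eta(c)},
\]
as required. The delicate step is the topological identification of the connected components of $K\cap\bigcap_{e\in A}H_e$ with a subset of $\CT(R)$: the argument works precisely because the molecule axiom forces $K_0\subseteq H_e$ for every $e\in T$, so intersecting $K$ with the $H_e$ ($e\in A$) does not alter the identity component and merely discards those cosets of $K_0$ on which some character $\chi_e$ fails to be trivial.
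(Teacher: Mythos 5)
Your proof is correct and takes essentially the same approach as the paper: both rely on $m(A)=|\CT(A)|$ and on the observation that each $c\in\CT(R)$ determines a unique subset $T_c=\{e\in T:H_e\supseteq c\}$ of $T$ with $\eta(c)=|T_c|$. You reach this by M\"obius-inverting to express $g_{[R,R\cup T]}(x)$ through $m$-values (supplying the explicit topological reason $K_0\subseteq H_e$ behind the containment $\CT(D)\subseteq\CT(R)$, which the paper asserts without proof), while the paper works directly with $\rho$-values via inclusion--exclusion on the sets $\widetilde{\CT}(R\cup L)$ --- the same argument with inverted bookkeeping.
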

\begin{proof}
It is known that $m(A) = |\CT(A)|$, see \cite{MoT}. If $R \subseteq A \subseteq D \subseteq R \cup T$, then $\CT(A) \supseteq \CT(D)$. For $A \in [R, R\cup T]$, let 
\begin{equation}\label{ct}
\widetilde{\CT}(A) :=\CT(A) \setminus \!\!\!\!\!\bigcup_{A \subsetneq D \subseteq R \cup T}\CT(D).
\end{equation}
Then $$\CT(R) = \bigcup_{L \subseteq T}\widetilde{\CT}(R\cup L)$$ where the union is disjoint. By inclusion--exclusion  
$$\rho(R\cup L, R \cup T) = |\widetilde{\CT}(R\cup L)|.$$ If $c \in \widetilde{\CT}(R\cup L)$, then the indices $e \in T$ for which $H_e \supseteq c$ are exactly those in $L$. This proves the lemma. 
\end{proof}
Using Lemma \ref{lemma-rho} and Proposition \ref{bj} we may now deduce the following geometric interpretation of the arithmetic Tutte polynomial. Let $B$ be a basis. 
To simplify the notation we set
\begin{align*}
\PP_B &:=\PP[B\setminus I(B), B],\\
\CT_B &:= \CT(B\setminus I(B)),\\
\eta(c) &:= \eta_{E(B)}(c).
\end{align*}
To summarize, recall that $\PP_B$ is the set of integer points of the semiopen zonotope defined by $F=I(B)$, while $\CT_B$ is a set of layers of the generalized toric arrangement defined by $R\cup T= (B\setminus I(B))\cup E(B)$.
\begin{theorem}\label{geom-int}
Let $\LL$ be a list of vectors in a finitely generated abelian group $G$. With definitions as above,
$$
M_{\LL}(x,y) = \sum_{B \in \BB} \left(\sum_{p \in \PP_B} x^{\iota(p)} \right)\left(\sum_{c \in \CT_B} y^{\eta(c)} \right).
$$
\end{theorem}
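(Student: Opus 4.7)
The plan is to chain together three ingredients already established in the paper: the partition of $2^E$ into molecules indexed by bases (Proposition \ref{bj}), the product factorization of the arithmetic Tutte sum over a molecule for a \emph{quasi}-arithmetic matroid (second half of Lemma \ref{lemma-rho}), and the two geometric identifications for the $f$- and $g$-polynomials (the two lemmas immediately preceding the statement). Since $\A_\LL$ is in particular quasi-arithmetic, all three tools apply.

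First I would start from the definition
\[
M_\LL(x,y) = \sum_{A \subseteq E} m(A)(x-1)^{\rk(E)-\rk(A)}(y-1)^{|A|-\rk(A)},
\]
and use Proposition \ref{bj}(i) to regroup this as a double sum, where the outer sum ranges over $B \in \BB$ and the inner sum ranges over $A$ in the molecule $[B \setminus I(B),\, B \cup E(B)]$. The small but essential observation here is that $B$ is a basis contained in $S := B \cup E(B)$, so $\rk(S)=\rk(E)$; hence the exponent $\rk(E)-\rk(A)$ in $M_\LL$ coincides with the exponent $\rk(S)-\rk(A)$ used in Lemma \ref{lemma-rho}. This legitimizes the substitution.

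Second, for each basis $B$ I would apply the quasi-arithmetic form of Lemma \ref{lemma-rho} with $R = B\setminus I(B)$, $F = I(B)$, $T = E(B)$, which by Proposition \ref{bj}(ii) really is a molecule. The lemma then factorizes the inner sum as
\[
f_{[R, R \cup F]}(x)\; g_{[R, R \cup T]}(y),
\]
in the notation of Section \ref{geom-repr}. Since $R \cup F = B$ and $R \cup T = (B \setminus I(B)) \cup E(B)$, the two factors are exactly $f_{[B \setminus I(B),\, B]}(x)$ and $g_{[B \setminus I(B),\, B \cup E(B)]}(y)$.

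Third, I would invoke the two lemmas proved just above the theorem to rewrite these factors geometrically as $\sum_{p \in \PP_B} x^{\iota(p)}$ and $\sum_{c \in \CT_B} y^{\eta(c)}$ respectively, using the very definitions $\PP_B = \PP[B \setminus I(B),\, B]$, $\CT_B = \CT(B \setminus I(B))$, and $\eta(c) = \eta_{E(B)}(c)$. Summing over $B \in \BB$ produces the claimed identity. I do not expect any genuine obstacle: the theorem is essentially a bookkeeping compilation of the molecule decomposition, the $(A2)$-factorization over a molecule, and the geometric readings of $\rho$ in terms of zonotope lattice points and toric layers. The only step that deserves a moment of care is matching the $\rk(E)$ exponent to $\rk(S)$ on each molecule, as noted above.
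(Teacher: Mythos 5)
Your proposal is correct and is essentially identical to the paper's own (very brief) proof: decompose $2^E$ into molecules via Proposition \ref{bj}, apply the quasi-arithmetic factorization in Lemma \ref{lemma-rho} on each molecule $[B\setminus I(B), B\cup E(B)]$ (noting $\rk(S)=\rk(E)$ since $B\subseteq S$ is a basis), and then substitute the geometric identifications of $f_{[R,R\cup F]}$ and $g_{[R,R\cup T]}$ from the two preceding lemmas. One tiny notational slip: the second factor should be written $g_{[B\setminus I(B),\,(B\setminus I(B))\cup E(B)]}(y)$, i.e.\ over the interval $[R,R\cup T]$ rather than $[R,S]$, but this does not affect the argument.
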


\bigskip
\section{A Fortuin-Kasteleyn quasi-polynomial}\label{kastsec}

Let $\GG=(V,E)$ be a finite graph with vertex set $V$ and edge set $E$.
In \cite{Sok} the multivariate Tutte polynomial of a graph was defined as
$$
Z_\GG(q,\vv) := \sum_{A \subseteq E} q^{k(A)} \prod_{e \in A}v_e,
$$
where $k(A)$ denotes the number of connected components in the subgraph $(V,A)$. The multivariate Tutte polynomial has an interpretation in statistical physics as the \emph{partition function of the $q$-state Potts model}.
\begin{theorem}[Fortuin--Kasteleyn]\label{FK-rep}
For any positive integer $q$,
$$
Z_\GG(q,\vv)= \sum_{\sigma : V \rightarrow [q]} \prod_{e=ij \in E} (1+v_e\delta(\sigma(i), \sigma(j))),
$$
where $\delta$ is the Kronecker delta and $[q]:=\{1,\ldots, q\}$.
\end{theorem}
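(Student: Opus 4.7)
The plan is to prove the identity by expanding the product on the right-hand side using the distributive law and then interchanging the two summations. Concretely, for each edge $e=ij\in E$, the factor $1 + v_e \delta(\sigma(i),\sigma(j))$ contributes either the summand $1$ or the summand $v_e\delta(\sigma(i),\sigma(j))$. Recording by $A \subseteq E$ the set of edges for which the second alternative is taken, one obtains
$$
\prod_{e=ij\in E}\bigl(1+v_e\delta(\sigma(i),\sigma(j))\bigr) = \sum_{A\subseteq E}\Bigl(\prod_{e\in A} v_e\Bigr)\prod_{e=ij\in A}\delta(\sigma(i),\sigma(j)).
$$

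Next I would swap the order of summation, obtaining
$$
\sum_{\sigma : V\to [q]}\prod_{e=ij\in E}\bigl(1+v_e\delta(\sigma(i),\sigma(j))\bigr) = \sum_{A\subseteq E}\Bigl(\prod_{e\in A}v_e\Bigr)\sum_{\sigma : V\to[q]}\prod_{e=ij\in A}\delta(\sigma(i),\sigma(j)).
$$
The inner sum counts those maps $\sigma:V\to[q]$ which are constant along every edge of $A$. Since $\sigma(i)=\sigma(j)$ for all $ij\in A$ is equivalent to $\sigma$ being constant on every connected component of the spanning subgraph $(V,A)$, the number of such colorings equals $q^{k(A)}$, where $k(A)$ is the number of connected components of $(V,A)$.

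Substituting yields
$$
\sum_{\sigma:V\to[q]}\prod_{e=ij\in E}\bigl(1+v_e\delta(\sigma(i),\sigma(j))\bigr) = \sum_{A\subseteq E} q^{k(A)}\prod_{e\in A} v_e,
$$
which is exactly $Z_\GG(q,\vv)$ by the definition recalled at the beginning of this section. The argument is elementary and involves no genuine obstacle; the only point requiring any care is the identification of the counting step, namely the passage from ``$\sigma$ is monochromatic on every edge of $A$'' to ``$\sigma$ is constant on each connected component of $(V,A)$'', which is immediate from the transitivity of equality along paths in $(V,A)$.
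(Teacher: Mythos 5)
Your proof is correct: the expansion of the product over edges, the interchange of summation, and the identification of the inner sum as the number of maps $\sigma : V \to [q]$ constant on the connected components of $(V,A)$, giving $q^{k(A)}$, is the classical direct argument and every step is sound. The paper, however, does not prove Theorem \ref{FK-rep} this way (indeed it gives no stand-alone proof of it at all): it states the theorem as the motivating classical result and then recovers it later as the graphical special case of its general Fortuin--Kasteleyn formula for lists in finitely generated abelian groups (Theorems \ref{genFK} and \ref{F-K}). There the same expansion trick is carried out, but the inner count becomes $|\{\phi \in \Hom(G,\ZZ_q) : \LL_A \subseteq \ker\phi\}| = |\Hom(G/\langle \LL_A\rangle, \ZZ_q)|$, which is evaluated via the structure theorem (Lemma \ref{hom-q}) as $\frac{m(A)}{|qG_A|}q^{\rk(G)-\rk(A)}$; in the graph case $G=\ZZ^V$, the incidence vectors $g_e$ form a totally unimodular list, so $m(A)=1$, $G_A=(0)$, and $q^{\rk(G)-\rk(A)} = q^{|V|-(|V|-k(A))} = q^{k(A)}$, recovering your count. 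So the two arguments share the same combinatorial core (expand, swap, count), but your route is more elementary and self-contained, using only transitivity of equality along paths, while the paper's route buys generality: the same computation simultaneously yields the arithmetic/quasi-polynomial versions for arbitrary finitely generated abelian groups, at the cost of the group-theoretic bookkeeping and the translation between $k(A)$ and $\rk(A)$.
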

Theorem \ref{FK-rep} is known as the \emph{Fortuin--Kasteleyn representation} of the $q$-state Potts model.
Our main goal of this section is to generalize this theorem to lists of vectors in finitely generated abelian groups.

We define here a generalization of the Potts model which is similar to the one studied by Caracciolo, Sportiello and Sokal, see \cite[Section 3.2]{Sok}.
Let $\LL=(g_e)_{e \in E}$ be a list of elements in a finitely generated abelian group $G$, and let $H$ be a finite abelian group. Then
$$
Z_\LL(G, H,\vv) := \sum_{\phi \in \Hom(G, H)} \prod_{e \in E}(1+v_{e}\delta(\phi(g_e),0)).
$$

The special case when $H=\ZZ_q:= \ZZ / q\ZZ$ will be particularly interesting, and we set
$$
 Z_\LL^P(q,\vv) := \sum_{\phi \in \Hom(G, \ZZ_q)} \prod_{e \in E}(1+v_{e}\delta(\phi(g_e),0)).
$$

We will prove that $Z_\LL^P(q,\vv)$ is a quasi-polynomial in $q$ that interpolates between the arithmetic multivariate Tutte polynomial $Z_{\cA_\LL}(q,\vv)$ of the arithmetic matroid $\cA_\LL$ and the multivariate Tutte polynomial $Z_{\MM_\LL}(q,\vv)$ of its underlying matroid $\MM_\LL$.

\begin{remark}\label{not-ari}
Notice that $Z_\LL^P(q,\vv)$ is \emph{not} an invariant of the arithmetic matroid. For instance, the empty list in $\ZZ_4$ defines the same arithmetic matroid as the empty list in $\ZZ_2 \oplus \ZZ_2$, but $Z_\LL^P(2,\vv)$ is equal to $2$ in the former case, and to $4$ in the latter.
\end{remark}

First a simple lemma. For a group $G$ and integer $q$, let $qG:= \{ qh : h \in G\}$ where $\ZZ$ acts on $G$ in the usual way.
\begin{lemma}\label{hom-q}
If $G$  is a finitely generated abelian group of rank $r$, then
$$
\Hom(G,\ZZ_q) \cong  G/qG  \cong \ZZ_q^r \oplus G_t/qG_t.
$$
\end{lemma}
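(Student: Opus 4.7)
The plan is to reduce the statement to elementary facts about cyclic groups by invoking the structure theorem for finitely generated abelian groups. Write $G \cong \ZZ^r \oplus G_t$ where $G_t \cong \bigoplus_{i} \ZZ_{n_i}$ is the torsion part.

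First I would handle the second isomorphism $G/qG \cong \ZZ_q^r \oplus G_t/qG_t$, which is immediate from the fact that multiplication by $q$ respects the direct sum decomposition, so $qG = q\ZZ^r \oplus qG_t$ and therefore
$$
G/qG \;\cong\; \ZZ^r/q\ZZ^r \,\oplus\, G_t/qG_t \;\cong\; \ZZ_q^r \oplus G_t/qG_t.
$$

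For the first isomorphism $\Hom(G,\ZZ_q) \cong G/qG$, the key observation is that for any $\phi\in\Hom(G,\ZZ_q)$ and any $h\in G$ we have $\phi(qh)=q\phi(h)=0$ in $\ZZ_q$, so every homomorphism $G\to\ZZ_q$ vanishes on $qG$ and therefore factors uniquely through the quotient. Hence $\Hom(G,\ZZ_q) \cong \Hom(G/qG,\ZZ_q)$. Since $G/qG$ has exponent dividing $q$, the right-hand side decomposes as a direct sum over the cyclic factors of $G/qG \cong \ZZ_q^r \oplus \bigoplus_i \ZZ_{\gcd(n_i,q)}$, and on each cyclic factor $\ZZ_d$ with $d \mid q$ one has the obvious isomorphism $\Hom(\ZZ_d,\ZZ_q)\cong \ZZ_d$ (send a homomorphism to its value at the generator, which must lie in the unique subgroup of $\ZZ_q$ of order $d$). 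Assembling the factors gives $\Hom(G/qG,\ZZ_q)\cong G/qG$.

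There is no real obstacle here; the only care needed is the verification that the cyclic computations $\Hom(\ZZ,\ZZ_q)\cong \ZZ_q$ and $\Hom(\ZZ_n,\ZZ_q)\cong \ZZ_{\gcd(n,q)}\cong \ZZ_n/q\ZZ_n$ indeed match the corresponding factors of $G/qG$, which is a standard bookkeeping exercise.
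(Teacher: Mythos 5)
Your proof is correct and follows essentially the same route as the paper: invoke the structure theorem, reduce to $\Hom(G/qG,\ZZ_q)$, distribute over the direct sum, and compute on cyclic factors (your $\Hom(\ZZ_d,\ZZ_q)\cong\ZZ_d$ for $d\mid q$ is the paper's $\Hom(\ZZ_a,\ZZ_{ab})\cong\ZZ_a$). The only difference is that you spell out the bookkeeping that the paper leaves implicit.
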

\begin{proof}
The lemma follows easily from the structure theorem of finitely generated abelian groups  by observing that
$\Hom(G, \ZZ_q) \cong \Hom(G/qG, \ZZ_q)$, $\Hom(\ZZ_a, \ZZ_{ab}) \cong \ZZ_a$,  $\Hom(\ZZ, \ZZ_{q}) \cong \ZZ_q$, and  $$\Hom(G_1 \oplus G_2, \ZZ_{q}) \cong  \Hom(G_1, \ZZ_{q}) \oplus \Hom(G_2, \ZZ_{q}).$$
\end{proof}
%
%


\begin{theorem}\label{genFK}
If $H \cong \oplus_{i=1}^k \ZZ_{q_i}$ and $q=|H|$, then
$$
Z_\LL(G, H,\vv) = q^{\rk(G)}\sum_{A \subseteq E}q^{-\rk(A)} \prod_{i=1}^k \frac{m(A)}{|q_iG_A|}\prod_{e \in A}v_e.
$$
\end{theorem}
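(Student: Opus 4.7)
The plan is the standard Fortuin--Kasteleyn expansion adapted to the group-theoretic setting. First I would expand the product defining $Z_\LL(G,H,\vv)$ as a sum over subsets, using
$$
\prod_{e \in E}\bigl(1+v_e\delta(\phi(g_e),0)\bigr)=\sum_{A\subseteq E}\Bigl(\prod_{e\in A}v_e\Bigr)\prod_{e\in A}\delta(\phi(g_e),0),
$$
and then interchange the sum over $\phi$ with the sum over $A$. This rewrites
$$
Z_\LL(G,H,\vv)=\sum_{A\subseteq E}\Bigl(\prod_{e\in A}v_e\Bigr)\,N(A,H),
$$
where $N(A,H)$ counts $\phi\in\Hom(G,H)$ which vanish on every $g_e$ for $e\in A$.

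Next I would observe that $\phi(g_e)=0$ for all $e\in A$ is equivalent to $\phi$ vanishing on the subgroup $\langle\LL_A\rangle$, hence to $\phi$ factoring through the natural projection $G\twoheadrightarrow G/\langle\LL_A\rangle$. This gives the identification
$$
N(A,H)=\bigl|\Hom\bigl(G/\langle\LL_A\rangle,\,H\bigr)\bigr|.
$$
Since $H\cong\bigoplus_{i=1}^k\ZZ_{q_i}$, Hom into $H$ splits as a direct sum over the factors, so
$$
N(A,H)=\prod_{i=1}^{k}\bigl|\Hom\bigl(G/\langle\LL_A\rangle,\,\ZZ_{q_i}\bigr)\bigr|.
$$

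The quotient $G/\langle\LL_A\rangle$ is a finitely generated abelian group whose torsion part is $G_A$ by definition, and whose free rank is $\rk(G)-\rk(A)$ (by additivity of rank on short exact sequences, using $\rk(\langle\LL_A\rangle)=\rk(A)$). Applying Lemma \ref{hom-q} to this quotient for each $q=q_i$ gives
$$
\bigl|\Hom\bigl(G/\langle\LL_A\rangle,\,\ZZ_{q_i}\bigr)\bigr|=q_i^{\rk(G)-\rk(A)}\cdot\bigl|G_A/q_iG_A\bigr|=q_i^{\rk(G)-\rk(A)}\cdot\frac{m(A)}{|q_iG_A|},
$$
where the last equality uses $|G_A|=m(A)$ and $|G_A/q_iG_A|=|G_A|/|q_iG_A|$.

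Taking the product over $i=1,\ldots,k$ and using $\prod_i q_i=|H|=q$ collapses the free-rank contribution to $q^{\rk(G)-\rk(A)}$, yielding
$$
N(A,H)=q^{\rk(G)-\rk(A)}\prod_{i=1}^{k}\frac{m(A)}{|q_iG_A|}.
$$
Substituting this back into the expansion of $Z_\LL(G,H,\vv)$ produces the claimed formula. The only mildly delicate step is tracking the free rank and torsion of $G/\langle\LL_A\rangle$ carefully so that Lemma \ref{hom-q} applies cleanly; everything else is a routine expansion and reindexing.
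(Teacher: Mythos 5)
Your proof is correct and follows essentially the same route as the paper: expand the product, identify the coefficient of $\prod_{e\in A}v_e$ as $\bigl|\Hom(G/\langle\LL_A\rangle,H)\bigr|$, split over the cyclic factors of $H$, and apply Lemma~\ref{hom-q}. The only cosmetic difference is that the paper first handles $H=\ZZ_q$ and then invokes the Hom-into-direct-sum isomorphism, whereas you carry the general $H$ through from the start; the content is identical.
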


\begin{proof}
Assume first that $H=\ZZ_q$. Expand the product in the sum to obtain
\begin{align*}
 \sum_{\phi \in \Hom(G, \ZZ_q)} \prod_{e \in E}(1+v_{e}\delta(\phi(g_e),0)) &= \sum_{A \subseteq E} |\{\phi : \langle \LL_A \rangle \subseteq \ker(\phi)\}| \prod_{e \in A}v_e \\
&= \sum_{A \subseteq E} |\Hom( G/\langle  \LL_A \rangle, \ZZ_q)| \prod_{e \in A}v_e .
\end{align*}
By Lemma~\ref{hom-q},
\begin{equation}\label{alt-m}
|\Hom(G/ \langle \LL_A \rangle, \ZZ_q)|=|G_A/qG_A| q^{\rk(G)-\rk(A)} =\frac {m(A)}{|qG_A|}q^{\rk(G)-\rk(A)}.
\end{equation}
The proof for general $H$ follows by observing that if $H=H_1\oplus H_2$, then
$$
\Hom(G/ \langle \LL_A \rangle, H_1\oplus H_2) \cong \Hom(G/ \langle \LL_A \rangle, H_1) \oplus \Hom(G/ \langle \LL_A \rangle, H_2).
$$
\end{proof}

\begin{remark}
\emph{Since $(q+|G|)G=qG$ holds for any finite group $G$ it follows that $Z_\LL^P(q,\vv)$ is a quasi-polynomial in $q$.}
\end{remark}

 Let $\LCM(\LL)$ denote the least common multiple of all $m(B)$, where $B \subseteq E$ is a basis. In the case when $G=\ZZ^n$ and $\LL$ has rank $r$, then $\LCM(\LL)$ equals the least common multiple of all non-zero $r\times r$ minors of $\LL$. Define two subsets of $\ZZ_+:=\{n \in \ZZ: n>0\}$ as follows
\begin{align*}
\ZZ_M(\LL) &:= \{q \in \ZZ_+ : \text{GCD}(q, \LCM(\LL))=1\},  \\
\ZZ_A(\LL) &:= \{q \in \ZZ_+ : qG_B=(0) \mbox{ for all bases } B \subseteq E \}.
\end{align*}
For example if $q$ is a multiple of $\LCM(\LL)$, then $q \in \ZZ_A(\LL)$.

\begin{theorem}\label{MFK}
Let $|H|=q$. Then $q \in \ZZ_M(\LL)$ if and only if
$$
Z_\LL(G, H,\vv) = q^{\rk(G)} Z_{\MM_\LL}(q,\vv),
$$
as a polynomial in $\vv$.
\end{theorem}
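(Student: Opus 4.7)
The plan is to match coefficients on the two sides and reduce the equality of polynomials to an arithmetic condition on the multiplicities. Writing $H \cong \bigoplus_{i=1}^{k}\ZZ_{q_i}$ with $q=\prod_i q_i$, Theorem \ref{genFK} expands
$$
Z_\LL(G,H,\vv) = q^{\rk(G)} \sum_{A \subseteq E} q^{-\rk(A)} \left(\prod_{i=1}^{k} |G_A/q_i G_A|\right) \prod_{e \in A}v_e,
$$
since $m(A)/|q_iG_A|=|G_A/q_iG_A|$. Comparing with $q^{\rk(G)} Z_{\MM_\LL}(q,\vv) = q^{\rk(G)} \sum_{A} q^{-\rk(A)} \prod_{e \in A} v_e$ monomial by monomial in the $v_e$, the stated identity holds as a polynomial in $\vv$ if and only if
$$
\prod_{i=1}^{k} |G_A/q_i G_A| = 1 \quad \text{for every } A \subseteq E.
$$

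Each factor is a positive integer, so this product equals $1$ exactly when each $q_i G_A = G_A$. Because $G_A$ is finite of order $m(A)$, multiplication by $q_i$ is surjective on $G_A$ iff it is an isomorphism iff $\gcd(q_i,m(A))=1$. Next I would observe that, since a prime $p$ divides $q = \prod_i q_i$ iff it divides some $q_i$, the system of conditions $\gcd(q_i,m(A))=1$ for all $i$ is equivalent to the single condition $\gcd(q,m(A))=1$. Thus the identity in the theorem holds if and only if $\gcd(q,m(A))=1$ for \emph{every} subset $A \subseteq E$.

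It remains to show that this latter condition is equivalent to $q \in \ZZ_M(\LL)$, i.e.\ $\gcd(q,m(B))=1$ for every basis $B$. The nontrivial direction requires showing that $m(A)$ divides $m(B)$ for some basis $B$. For this I would use axiom (A1) twice: first, pick a maximal independent subset $A' \subseteq A$; removing the dependent elements one by one keeps the rank constant, so (A1) gives $m(A) \mid m(A')$ at each step, hence $m(A)\mid m(A')$. Then extend $A'$ to a basis $B$ by adding independent elements, each of which strictly increases the rank, so (A1) gives $m(A') \mid m(B)$. Combining, $m(A) \mid m(B) \mid \LCM(\LL)$, which implies $\gcd(q,m(A))\mid \gcd(q,\LCM(\LL))$, closing the loop.

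The main obstacle is the final reduction from all subsets to bases: one must apply (A1) in the correct direction along a chain (divisibility flips depending on whether the added element is dependent or not) and verify that such a chain from any $A$ to some basis actually exists. Once that is in place the rest of the argument is a direct coefficient comparison combined with the standard fact that multiplication by $n$ is bijective on a finite abelian group of order coprime to $n$.
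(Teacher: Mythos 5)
Your proof is correct and follows the same overall structure as the paper's: apply Theorem \ref{genFK}, compare coefficients, reduce the resulting condition to one about bases via a divisibility argument. The one place where you and the paper diverge is in the final reduction step. The paper invokes Lemma \ref{grr} directly: repeatedly applying the monomorphism/epimorphism statement, one extends $A$ to a spanning set (getting $|G_A|$ dividing the larger group) and then drops down to a basis inside it (getting that group's order dividing $|G_B|$). You instead use axiom (A1), first passing \emph{inside} $A$ to a maximal independent subset $A'$ (dependent elements removed, $m(A)\mid m(A')$) and then extending $A'$ to a basis $B$ ($m(A')\mid m(B)$). Both chains are valid and prove the same divisibility $m(A)\mid m(B)$ for some basis $B$. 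Your (A1)-based route is arguably cleaner and, pleasantly, is phrased purely in terms of the arithmetic matroid axioms rather than the group-theoretic lemma that is special to the representable case; but since (A1) is itself a consequence of Lemma \ref{grr} in the representable setting, the two arguments are logically on a par here. The rest of your coefficient comparison, the use of $m(A)/|q_iG_A|=|G_A/q_iG_A|$, and the observation that $\gcd(q_i,m(A))=1$ for all $i$ is equivalent to $\gcd(q,m(A))=1$, are all correct.
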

\begin{proof}
By Theorem \ref{genFK}, the equality holds if and only if $q_iG_A=G_A$ for all $A \subseteq E$ and $1 \leq i \leq k$, which is equivalent to that $\text{GCD}(q_i, |G_A|)=1$ for all $A \subseteq E$. By Lemma \ref{grr}, for each $A \subseteq E$, there is a basis $B \subseteq E$ for which $|G_A|$ divides $|G_B|$. Hence we only have to check the condition for bases.
\end{proof}

Note that when $\LL \subset \ZZ^n =G$ is a totally unimodular matrix, i.e., $m(B)=1$ for all bases $B$, then $\ZZ_M(\LL)=\ZZ_A(\LL)=\ZZ_+$. Thus Theorem \ref{MFK} extends \cite[Theorem 3.1]{Sok} and can also be seen as a refinement of the ``finite field method'' to compute the characteristic polynomial of a hyperplane arrangement, see \cite{Ath} (or its Tutte polynomial, see \cite{Ard,WW}). The proof of the following theorem follows immediately from Theorem \ref{genFK}:

\begin{theorem}\label{F-K}
Let $q$ be a positive integer. Then $q \in \ZZ_A(\LL)$ if and only if
\begin{equation}\label{just}
Z_\LL^P(q,\vv)= q^{\rk(G)} Z_{\A_\LL}(q,\vv),
\end{equation}
as a polynomial in $\vv$. In particular if $q$ is a multiple of $\LCM(\LL)$, then \eqref{just} holds. 
\end{theorem}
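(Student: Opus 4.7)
The plan is to specialize Theorem \ref{genFK} to $H=\ZZ_q$ (so $k=1$ and $q_1=q$), which immediately yields
\begin{equation*}
Z_\LL^P(q,\vv) = q^{\rk(G)}\sum_{A \subseteq E}q^{-\rk(A)}\,\frac{m(A)}{|qG_A|}\prod_{e \in A}v_e.
\end{equation*}
Comparing this coefficient-by-coefficient with $q^{\rk(G)} Z_{\A_\LL}(q,\vv) = q^{\rk(G)}\sum_A m(A) q^{-\rk(A)}\prod_{e\in A} v_e$, and using that $m(A)>0$, the identity \eqref{just} holds as a polynomial in $\vv$ if and only if $|qG_A|=1$, i.e., $qG_A=(0)$, for every $A\subseteq E$.

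Next I would show that this condition on all subsets $A$ is equivalent to the ostensibly weaker condition $qG_B=(0)$ for every basis $B$, which is precisely $q\in\ZZ_A(\LL)$. One direction is trivial. For the other, given any $A\subseteq E$, pick an independent subset $A'\subseteq A$ with $\rk(A')=\rk(A)$. Adding the elements of $A\setminus A'$ one at a time does not increase the rank, so repeated application of the second half of Lemma \ref{grr} produces an epimorphism $G_{A'}\twoheadrightarrow G_A$. Now extend $A'$ to a basis $B$ of the matroid by adding elements that increase the rank at each step; repeated application of the first half of Lemma \ref{grr} yields a monomorphism $G_{A'}\hookrightarrow G_B$. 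Hence $qG_B=(0)$ forces $qG_{A'}=(0)$, which in turn forces $qG_A=(0)$, as desired.

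For the last sentence, suppose $q$ is a multiple of $\LCM(\LL)$. For every basis $B$ the integer $m(B)=|G_B|$ divides $\LCM(\LL)$ and hence divides $q$. Since $G_B$ is a finite abelian group, its exponent divides $|G_B|$, and thus $qg=0$ for every $g\in G_B$. Therefore $q\in\ZZ_A(\LL)$, and the previous paragraph gives \eqref{just}. The only step requiring any real thought is the reduction from arbitrary $A$ to bases; the rest is a direct reading of Theorem \ref{genFK}.
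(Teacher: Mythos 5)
Your proof is correct and follows the route the paper intends (the paper states the result ``follows immediately from Theorem~\ref{genFK}'' and its preceding proof of Theorem~\ref{MFK} shows the intended reduction to bases via Lemma~\ref{grr}). You fill in the details carefully: reading off the condition $qG_A=(0)$ for all $A$ from Theorem~\ref{genFK}, and then reducing to bases by composing the epimorphism $G_{A'}\twoheadrightarrow G_A$ (repeatedly adding non-rank-increasing elements) with the monomorphism $G_{A'}\hookrightarrow G_B$ (repeatedly adding rank-increasing elements), which correctly transfers the exponent condition $qG_B=(0)\Rightarrow qG_{A'}=(0)\Rightarrow qG_A=(0)$. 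The argument for the $\LCM$ consequence is also sound.
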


Theorem \ref{F-K} is a refinement of the finite field method to compute the characteristic polynomial of a toric arrangement, see \cite{ERS}.

\begin{example}
\emph{Let us see why Theorem \ref{FK-rep} is a special case of Theorem \ref{F-K}. Let $\GG=(V,E)$ be a graph on $V=[n]$. Let further $G=\ZZ^n$
and $\LL=(g_e)_{e \in E}$ where $g_e$ is the vector with $i$th coordinate $1$ and $j$th coordinate $-1$ and the other coordinates $0$, where $e=\{i,j\}$ and $i<j$.  Then $m(A) =1$ for all $A$ since the matrix $\LL$ is totally unimodular, and hence $G_A=(0)$ for all $A \subseteq E$. Moreover $\Hom(G,\ZZ_q)= \ZZ_q^V$, so  Theorem \ref{FK-rep} follows.}
\end{example}

\begin{example}\emph{
Let $G= \ZZ^3$ and
$$
\LL= \left[ \begin{array}{ccccccc}
1&1&0&0&0&1&1 \\
0&1&1&1&0&0&1 \\
0&0&0&1&1&1&1
\end{array}\right].
$$
Then $\MM_\LL$ is the non--Fano matroid $F_7^-$, which is not a regular matroid (see for example \cite{Ox}). The multiplicities of the bases are given by the absolute values of the nonzero $3 \times 3$ minors and are thus equal to $1$ or $2$. Hence for $q$ a positive integer
$$
Z_\LL^P(q,\vv)= q^3\begin{cases} Z_{F_7^-}(q,\vv), \mbox{ if $q$ is odd, }\\
Z_{\A_\LL}(q,\vv), \mbox{ if $q$ is even. }
\end{cases}
$$
}
\end{example}

In view of Theorem \ref{genFK} it is natural to define a \emph{Tutte quasi-polynomial} by 

$$
{Q}_\LL(x,y) :=
\sum_{A \subseteq E} \frac{|G_A|}{|(x-1)(y-1) G_A|}(x-1)^{\rk(E)-\rk(A)}(y-1)^{|A|-\rk(A)}.
$$
By Theorem \ref{genFK}, we have an analogue of \eqref{ZtoT} and \eqref{ZtoM}:
\begin{equation}\label{ZtoQ}
Z^P_\LL\left( (x-1)(y-1), y-1\right) = (x-1)^{-\rk(E)}Q_\LL(x,y).
\end{equation}
Hence ${Q}_\LL(x,y)$ is a quasi-polynomial in $q=(x-1)(y-1)$, which by Theorems \ref{MFK}, \ref{F-K} coincides with $M_{\LL}(x,y)$  when $q\in \ZZ_A(\LL)$ and with $T_{\LL}(x,y)$ when $q\in \ZZ_M(\LL)$.

From Remark \ref{not-ari} we see that ${Q}_\LL(x,y)$ is an invariant of the list $\LL$, and not of the arithmetic matroid.

\bigskip
\section{Generalized flows}\label{flowsec}

For $A \subseteq E$ and $q \in \ZZ_+$ define a homomorphism  $\Psi_A^q : \ZZ_q^A \rightarrow G/qG$ by
$$
\Psi_A^q(\phi) = \sum_{e \in A} \phi(e)g_e + qG.
$$
By analogy with $q$-flows in graphs (see Section \ref{Se-gra}), an element $\phi \in \ker(\Psi_E^q)$ will be called a $(\LL,q)$-\emph{flow}. If in addition $\phi(e) \neq 0$ for all
$e \in E$, $\phi$ is called \emph{nowhere-zero}. Hence a nowhere-zero $(\LL,q)$-flow is a map $\phi : E \rightarrow \ZZ_q \setminus \{0\}$ for which
$
 \sum_{e \in E} \phi(e)g_e = 0
$
in $G/qG$.

The \emph{multivariate arithmetic flow polynomial} of $\LL$ is defined by
$$
F_\LL(q,\vv):= \sum_{\phi \in \ker(\Psi_E^q)} \prod_{e \in E} (1 +v_e \delta(\phi(e),0)).
$$

\begin{theorem}\label{flow}
For positive integers $q$,
$$
F_\LL(q,\vv)= q^{-\rk(G)}\frac {|qG_t|}{m(\emptyset)}\left( \prod_{e \in E}v_e \right)Z_\LL^P(q, q/\vv),
$$
where $q/\vv := \{q/v_e\}_{e \in E}$.
Moreover,
\begin{enumerate}
\item If $q \in \ZZ_M(\LL)$, then
$$
F_\LL(q,\vv)= \left( \prod_{e \in E}v_e \right)Z_{\MM_\LL}(q, q/\vv).
$$
\item If $q \in \ZZ_A(\LL)$, then
$$
F_\LL(q,\vv)= \frac {1}{m(\emptyset)} \left( \prod_{e \in E}v_e \right)Z_{\A_\LL}(q, q/\vv).
$$
\end{enumerate}
\end{theorem}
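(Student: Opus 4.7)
The plan is to expand the product defining $F_\LL(q,\vv)$ into a subset-sum, recognize the inner sum over flows as a kernel cardinality $|\ker(\Psi_A^q)|$ with $A$ the complementary subset, and then match the resulting expression against the explicit expansion of $Z_\LL^P(q,q/\vv)$ coming from Theorem \ref{genFK}. The specializations (1) and (2) will then drop out by invoking Theorems \ref{MFK} and \ref{F-K} after verifying that the prefactor simplifies in each case.

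First I would expand $\prod_{e\in E}(1+v_e\delta(\phi(e),0)) = \sum_{S\subseteq E}\prod_{e\in S}v_e\delta(\phi(e),0)$, exchange summations, and observe that flows $\phi \in \ker(\Psi_E^q)$ vanishing on $S$ are exactly restrictions of elements of $\ker(\Psi_{E\setminus S}^q)$. Writing $A = E\setminus S$ this gives
\[
F_\LL(q,\vv) \;=\; \sum_{A\subseteq E}\Bigl(\prod_{e\notin A}v_e\Bigr)\,|\ker(\Psi_A^q)|.
\]
The main step is the computation of $|\ker(\Psi_A^q)|$. Since $\Psi_A^q$ has image $(\langle \LL_A\rangle + qG)/qG$, the third isomorphism theorem expresses $|\mathrm{Im}(\Psi_A^q)|$ as $|G/qG|$ divided by $|(G/\langle \LL_A\rangle)/q(G/\langle \LL_A\rangle)|$. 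Applying the decomposition of Lemma \ref{hom-q} to both $G$ and $G/\langle \LL_A\rangle$ (whose torsion is $G_A$ and rank is $\rk(G)-\rk(A)$) gives $|G/qG| = q^{\rk(G)}\,m(\emptyset)/|qG_t|$ and the analogous formula for the quotient. The net result is
\[
|\ker(\Psi_A^q)| \;=\; q^{|A|-\rk(A)}\,\frac{m(A)\,|qG_t|}{m(\emptyset)\,|qG_A|},
\]
and comparing the resulting sum with $\bigl(\prod_{e\in E} v_e\bigr)\,Z_\LL^P(q,q/\vv)$ expanded via Theorem \ref{genFK} establishes the main identity.

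For (1), when $q \in \ZZ_M(\LL)$ iterated application of Lemma \ref{grr} along a chain from $A$ to a basis shows $m(A) \mid m(B)$ for some basis $B$, so $\gcd(q,m(A))=1$ for every $A$; consequently multiplication by $q$ is a bijection on the finite group $G_A$, giving $|qG_A| = m(A)$ and, in particular, $|qG_t| = m(\emptyset)$. The prefactor collapses and Theorem \ref{MFK} converts $Z_\LL^P(q,q/\vv)$ into $q^{\rk(G)}Z_{\MM_\LL}(q,q/\vv)$. For (2), when $q \in \ZZ_A(\LL)$, embedding $G_A$ into some $G_B$ via Lemma \ref{grr} forces $|qG_A| = 1$ for every $A$ (since $qG_B = 0$), so in particular $|qG_t|=1$; the prefactor becomes $q^{-\rk(G)}/m(\emptyset)$ and Theorem \ref{F-K} replaces $Z_\LL^P$ by $q^{\rk(G)}Z_{\A_\LL}$. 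The main obstacle is the group-theoretic computation of $|\ker(\Psi_A^q)|$, with the risk of mishandling the torsion contributions coming from $G_t$ and $G_A$; once this is tracked correctly the remainder is essentially bookkeeping.
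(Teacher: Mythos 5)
Your proof is correct and follows essentially the same route as the paper's: expand the product to get $F_\LL(q,\vv)=\sum_{A\subseteq E}|\ker(\Psi_A^q)|\prod_{e\in E\setminus A}v_e$, compute $|\ker(\Psi_A^q)|=q^{|A|-\rk(A)}\,m(A)|qG_t|/(m(\emptyset)|qG_A|)$ via Lemma \ref{hom-q} and the third isomorphism theorem, and match against the expansion from Theorem \ref{genFK}, with the specializations then following from Theorems \ref{MFK} and \ref{F-K} exactly as the paper intends (it leaves those steps implicit). One small imprecision: for a dependent set $A$, Lemma \ref{grr} does not literally embed $G_A$ into some $G_B$ --- rather $G_A$ is a quotient of $G_{A'}$ for a maximal independent $A'\subseteq A$, and $G_{A'}$ embeds into $G_B$ --- but since $qG_B=(0)$ kills exponents this still gives $qG_A=(0)$, so your conclusion in case (2) stands.
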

\begin{proof}Expand the product in the sum to get
$$
\sum_{\phi \in \ker(\Psi_E^q)} \prod_{e \in E} (1 +v_e \delta(\phi(e),0))= \sum_{A \subseteq E} |\ker(\Psi_A^q)|\prod_{b \in E \setminus A}v_b.
$$
By Lemma \ref{hom-q} and the third isomorphism theorem
$$
\Hom(G/ \langle \LL_A \rangle, \ZZ_q) \cong \frac {G/\langle \LL_A \rangle}{q \left(G/\langle \LL_A \rangle\right)}
\cong \frac {G/\langle \LL_A \rangle}{(qG+\langle \LL_A \rangle )/\langle \LL_A \rangle } \cong \frac {G}{qG+\langle \LL_A \rangle}.
$$
Also
$$
\Im(\Psi_A^q) \cong {(qG+\langle \LL_A \rangle)/qG},
$$
and hence by the  third isomorphism theorem again
$$
\Hom(G/ \langle \LL_A \rangle, \ZZ_q)\cong \frac {G}{qG+\langle \LL_A \rangle} \cong \frac {G/qG}{(qG+\langle \LL_A\rangle)/qG}.
$$
By \eqref{alt-m} and Lemma \ref{hom-q}
$$
\frac{m(A)}{|G_A|}q^{\rk(G)-\rk(A)} = \frac {|G/qG|}{|(qG+\langle \LL_A\rangle)/qG|}= $$
$$=\frac{ m(\emptyset)q^{\rk(G)}}{|qG_\emptyset||\Im(\Psi_A^q)|}= \frac{m(\emptyset)}{|qG_\emptyset|}q^{\rk(G)-|A|}|\ker(\Psi_A^q)|.
$$
Hence
$$
F_\LL(q,\vv)= \sum_{A \subseteq E} |\ker(\Psi_A^q)|\prod_{b \in E \setminus A}v_b =
\frac {|qG_t|} {m(\emptyset)}\sum_{A \subseteq E} \frac{m(A)}{|qG_A|}q^{|A|-\rk(A)} \prod_{b \in E \setminus A}v_b.
$$
\end{proof}

\bigskip
\section{Chromatic quasi-polynomials and flow quasi-polynomials}\label{Se-gra}

Let us see how colorings and flows on graphs may be generalized using the ideas in the previous sections. Some of the following observations were also made in \cite{DM-G}, where the results were obtained by deletion-contraction arguments. 

Let $\Di=(V,E)$ be a directed graph without loops and assign positive integer weights $\{ w_e \}_{e \in E}$ to the edges. Define a list of vectors $\LL=(g_e)_{e \in E}$  in $G:= \ZZ^V$ as follows. If $e$ is an edge from $i$ to $j$, let $g_e$ be the vector with entry $j$ equal to $w_e$, entry $i$ equal to $-w_e$, and 
the other entries equal to $0$. Then we define a \emph{proper $(\LL,q)$-coloring} 
as a map $\phi : V \rightarrow \ZZ_q$ such that 
$$
w_e \phi(i) \neq w_e \phi(j),
$$
whenever $e$ is an edge between $i$ and $j$. We define the \emph{chromatic quasi-polynomial} $\chi_\LL(q)$ as the function that counts such colorings. The fact that this is actually a quasi-polynomial, and that it is independent on the orientation of the graph, are consequences of the next theorem.

Dually, the flow quasi-polynomial $\chi^*_\LL(q)$ counts maps $\phi : E \rightarrow \ZZ_q\setminus \{0\}$ such that the weighted Kirchhoff laws hold: for each $i \in V$
$$
\sum_{i \stackrel e {\rightarrow} } \phi(e)w_e = \sum_{\stackrel e {\rightarrow} i} \phi(e)w_e,
$$ 
where $i \stackrel e {\rightarrow}$ means that $e$ is an edge from $i$ to some other vertex. 

We will show that the quasi-polynomials above are specializations of the Tutte quasi-polynomial. This fact holds more generally for any list $\LL$ of elements of a finitely generated abelian groups $G$. By analogy with the graphic case, we call an element $\phi \in \Hom(G,\ZZ_q)$ a \emph{proper $(\LL,q)$-coloring} if $\phi(g_e) \neq 0$ for all $e \in E$, and we denote by $\chi_\LL(q)$ the number of such colorings. Notice that by definition this is equal to the evaluation of $Z^P_\LL(q, \vv)$ at $v_e=-1$ for all $e\in E$, that we denote by $Z^P_\LL(q, -1)$. We call $\chi_\LL(q)$ the \emph{chromatic quasi-polynomial}. 

Dually, denote by $\chi_\LL^*(q)$ the number of nowhere-zero $(\LL,q)$-flows. By Theorem \ref{flow} this is a quasi-polynomial in $q$ which we call the \emph{flow quasi-polynomial}.

We have the following result, generalizing a famous theorem of Tutte \cite{Tu}, and also its analogue in the arithmetic setting \cite{DM-G}:

\begin{theorem}\label{color-cor}\label{flow-cor}
Let $q$ be a positive integer. Then
\begin{align}
\chi_\LL(q) &=(-1)^{\rk(E)}q^{\rk(G)-\rk(E)}Q_\LL(1-q,0), \quad \mbox{ and } \label{l1} \\
\chi_\LL^*(q)&=(-1)^{|E|-\rk(E)}\left(m(\emptyset)\right)^{-1}Q_\LL(0,1-q). \label{l2}
\end{align}
In particular, if $q \in \ZZ_M(\LL)$, then
\begin{align}
\chi_\LL(q) &= (-1)^{\rk(E)}q^{\rk(G)-\rk(E)}T_\LL(1-q,0), \quad \mbox{ and }  \label{l3} \\
\chi_\LL^*(q)&= (-1)^{|E|-\rk(E)}T_\LL(0,1-q). \label{l4}
\end{align}
If, on the other hand, $q \in \ZZ_A(\LL)$, then
\begin{align}
\chi_\LL(q) &= (-1)^{\rk(E)}q^{\rk(G)-\rk(E)}M_\LL(1-q,0), \quad \mbox{ and }  \label{l5} \\
\chi_\LL^*(q)&= (-1)^{|E|-\rk(E)}M_\LL(0,1-q). \label{l6}
\end{align}
\end{theorem}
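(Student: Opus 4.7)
The plan is to establish the two quasi-polynomial identities \eqref{l1} and \eqref{l2} first, by evaluating the multivariate Potts-type quasi-polynomial $Z_\LL^P$ and the flow quasi-polynomial $F_\LL$ at $\vv = -1$, and then to deduce the four specializations \eqref{l3}--\eqref{l6} from the fact that $Q_\LL$ collapses to $T_\LL$ or $M_\LL$ in the appropriate ranges of $q$ via Theorems \ref{MFK} and \ref{F-K}.

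For \eqref{l1}, the starting point is the observation $\chi_\LL(q) = Z_\LL^P(q, -1)$: setting every $v_e = -1$ in the defining product $\prod_e(1 + v_e\delta(\phi(g_e),0))$ kills every $\phi \in \Hom(G, \ZZ_q)$ that fails to be proper and contributes $1$ to each proper coloring. I then apply \eqref{ZtoQ} with $y = 0$ (so that $\vv = y-1 = -1$) and $x = 1-q$ (so that $(x-1)(y-1) = q$) to rewrite $Z_\LL^P(q, -1)$ in terms of $Q_\LL(1-q, 0)$. Collecting signs and $q$-powers — using $(x-1)^{-\rk(E)} = (-q)^{-\rk(E)}$ — yields \eqref{l1}.

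For \eqref{l2}, the analogous starting point is $\chi^*_\LL(q) = F_\LL(q, -1)$, because $v_e = -1$ turns $\prod_e(1 + v_e\delta(\phi(e), 0))$ into the indicator of a nowhere-zero $(\LL,q)$-flow. Applying Theorem \ref{flow} with $\vv = -1$ produces the prefactor $q^{-\rk(G)}|qG_t|m(\emptyset)^{-1}(-1)^{|E|}$ times $Z_\LL^P(q, -q)$. A second application of \eqref{ZtoQ}, now with $y = 1-q$ (so that $\vv = -q$) and $x = 0$ (so that $(x-1)(y-1) = q$), expresses $Z_\LL^P(q, -q)$ as a multiple of $Q_\LL(0, 1-q)$, and combining the prefactors gives \eqref{l2}.

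For the specializations, the rest is routine. When $q \in \ZZ_M(\LL)$, Theorem \ref{MFK} (together with Lemma \ref{grr}, which propagates $\gcd(q, m(B))=1$ from bases to all $A$) forces $|qG_A| = |G_A|$ for every $A \subseteq E$, so the defining sum for $Q_\LL$ collapses to $T_\LL$, and \eqref{l1}--\eqref{l2} specialize to \eqref{l3}--\eqref{l4}. When $q \in \ZZ_A(\LL)$, Lemma \ref{grr} propagates $qG_B=0$ from bases to every $A$ by embedding $G_A$ into $G_B$ for any basis $B$ extending $A$, so $Q_\LL = M_\LL$, and in particular $|qG_t|=1$, which absorbs the extra factor in \eqref{l2} and yields \eqref{l5}--\eqref{l6}. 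The main obstacle is purely bookkeeping: tracking signs, $q$-exponents, and prefactors through the two substitutions, and in the flow case reconciling the $|qG_t|m(\emptyset)^{-1}$ emerging from Theorem \ref{flow} with the $m(\emptyset)^{-1}$ that appears in \eqref{l2}.
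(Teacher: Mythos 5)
Your proposal is correct and takes essentially the same route as the paper's own (very terse) proof: evaluate $Z^P_\LL(q,\vv)$ and $F_\LL(q,\vv)$ at $\vv=-1$, convert via \eqref{ZtoQ} with the substitutions $x=1-q,\,y=0$ and $x=0,\,y=1-q$ respectively, and obtain \eqref{l3}--\eqref{l6} from Theorems \ref{MFK}, \ref{F-K} and \ref{flow}. You merely make explicit the substitutions and prefactor bookkeeping that the paper leaves implicit, so no further comparison is needed.
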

\begin{proof} 

The first statement follows immediately from \eqref{ZtoQ}, since $\chi_\LL(q)=Z^P_\LL(q, -1)$. Likewise \eqref{l2} follows from Theorem \ref{flow} and \eqref{ZtoQ} because $\chi_\LL^*(q)$ is equal to $F_\LL(q, -1)$. 
The other statements are consequences of the first two and Theorems \ref{MFK}, \ref{F-K} and \ref{flow}.

\end{proof}

One may of course generalize the setting for graphs by for example allowing the nonzero entries of $g_e$ to not be equal in magnitude. We choose not to develop this direction here. 

\bigskip
\section{A multivariate Ehrhart polynomial}\label{se-Ehrart}
In this section we assume that $G$ has no torsion, i.e., $G\cong \ZZ^r$ for some $r$. Then, a list $\LL=(g_e)_{e\in E}$ in $G$ defines a \emph{zonotope} 
$$\mathcal{Z}(\LL):= \left\{\sum_{e\in E}s_e g_e:  0\leq s_e \leq 1 \mbox{ for all } e\in E\right\}.$$

Consider the polynomial 
$$
E_\LL(\vv) := Z_{\A_\LL}(q,q\vv)\Big|_{q=0}= \sum_{A} m(A) \prod_{e \in A}v_e,$$
where the sum is over all independent sets $A$ in $\MM$.  For  $\mathbf{k}=(k_e)_{e \in E} \in \mathbb{N}^E$ let $\mathbf{k}\cdot \LL= (k_eg_e)_{e \in E}$. 

\begin{proposition}\label{Ehrart}Let $\mathbf{k}=(k_e)_{e \in E} \in \mathbb{N}^E$. 

\begin{enumerate}
  \item $E_\LL(\kk)$ is equal to the number of integer points of the zonotope $\mathcal{Z}(\mathbf{k}\cdot \LL)$;
  \item $E_\LL(-\kk)$ is equal to the number of integer points in the interior of $\mathcal{Z}(\mathbf{k}\cdot \LL)$;
  \item the univariate polynomial $t \mapsto E_\LL(t, \ldots, t)$ is the Ehrhart polynomial of $\mathcal{Z}(\LL)$;
  \item the highest degree part of $E_\LL(\vv)$ is the mixed volume of the 1-dimensional polytopes $\mathcal{Z}(\{g_1\}), \ldots, \mathcal{Z}(\{g_n\})$.
\end{enumerate}
\end{proposition}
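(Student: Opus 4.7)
The plan is to reduce everything to the classical (Stanley) lattice-point formula for zonotopes, which asserts that for any list $\LL$ of integer vectors in $\ZZ^r$,
\begin{equation*}
|\mathcal{Z}(\LL) \cap \ZZ^r| = \sum_{I \text{ independent in } \MM_\LL} m_\LL(I).
\end{equation*}
For Part~(1), I would first establish the scaling identity $m_{\kk\cdot\LL}(I)=m_\LL(I)\prod_{e\in I}k_e$ whenever $I \subseteq E$ is independent in $\MM_\LL$ and $\kk\in\NN^E$ satisfies $k_e>0$ for every $e\in I$. This is immediate from the definition of multiplicity: scaling each $g_e$ by $k_e$ leaves the saturation $H_I$ of $\langle\LL_I\rangle$ unchanged, while the index $[H_I:\langle\kk\cdot\LL_I\rangle]$ is multiplied by exactly $\prod_{e\in I}k_e$. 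Substituting into Stanley's formula applied to the list $\kk\cdot\LL$ then gives exactly $E_\LL(\kk)$ (terms with $k_e=0$ for some $e\in I$ vanish on both sides).

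Part~(3) is then immediate from Part~(1) upon specialising to $\kk=(t,\dots,t)$, since $\mathcal{Z}(t\LL)=t\mathcal{Z}(\LL)$, so $E_\LL(t,\dots,t)=|t\mathcal{Z}(\LL)\cap\ZZ^r|$ is by definition the Ehrhart polynomial of $\mathcal{Z}(\LL)$. For Part~(4) I would use that the leading homogeneous component of a lattice polytope's Ehrhart polynomial equals its volume. Applied to $E_\LL$ (which by Part~(1) agrees with $|\mathcal{Z}(\vv\cdot\LL)\cap\ZZ^r|$ on $\NN^E$), its component of degree $d=\rk(E)$ equals $\mathrm{vol}\bigl(\sum_{e\in E}v_e[0,g_e]\bigr)$ as a polynomial in $\vv$. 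By the multilinearity of volume on Minkowski sums of line segments, this polynomial is $\sum_{B\in\BB}m(B)\prod_{e\in B}v_e$; the coefficient of $\prod_{e\in B}v_e$ equals (up to the standard normalisation factor $d!$) the mixed volume of the one-dimensional polytopes $\mathcal{Z}(\{g_e\})$ with $e\in B$, as required.

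For Part~(2), I would appeal to Ehrhart--Macdonald reciprocity. Since $E_\LL$ is a polynomial agreeing with $|\mathcal{Z}(\kk\cdot\LL)\cap\ZZ^r|$ on $\NN^E$, the single-parameter dilation $t\mapsto t\kk$ satisfies $\mathcal{Z}((t\kk)\cdot\LL)=t\mathcal{Z}(\kk\cdot\LL)$, so the univariate polynomial $t\mapsto E_\LL(t\kk)$ is the ordinary Ehrhart polynomial of the $d$-dimensional lattice zonotope $\mathcal{Z}(\kk\cdot\LL)$. Ehrhart--Macdonald reciprocity, evaluated at $t=1$, then yields the interior-point count of $\mathcal{Z}(\kk\cdot\LL)$ (with the customary sign $(-1)^d$ from reciprocity).

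The main obstacle I expect is the scaling identity in Part~(1): one must argue carefully that the torsion subgroup $G_I$ of $G/\langle\LL_I\rangle$ interacts correctly with the scaling of the vectors, since this is where the arithmetic content of the proof really lives (and independence of $I$ is essential, as for a dependent $I$ scaling can genuinely change the saturation). Once that identity is in hand, Parts~(3), (4) and (2) follow in a largely formal manner from Stanley's formula together with standard polytopal input (Ehrhart reciprocity for (2), the volume--leading-term identity and multilinearity of volume for (4)).
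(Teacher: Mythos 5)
Your proposal follows essentially the same route as the paper: it reduces everything to the zonotope lattice-point formula $|\mathcal{Z}(\LL)\cap\ZZ^r|=\sum\{m(A): A\text{ independent}\}$, establishes the scaling identity $m_{\kk\cdot\LL}(I)=m_\LL(I)\prod_{e\in I}k_e$ for independent $I$, and then derives parts (2), (3), (4) via specialisation, Ehrhart--Macdonald reciprocity, and the volume/leading-term interpretation, respectively. Your explicit remark that reciprocity introduces a sign $(-1)^{\rk(E)}$ in part (2) (and that mixed volume carries a normalisation factor $d!$ in part (4)) is a worthwhile clarification that the paper's statement and terse proof gloss over.
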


\begin{proof}
It is known (see for instance \cite{DM-E}) that the number of integer points in $\mathcal{Z}(\LL)$ is
$$\sum \{ m(A): A \subseteq E \mbox{ is independent}\}. $$
However if $A$ is independent, then the multiplicity of $A$ in $\kk \cdot \LL$ is equal to $m(A) \prod_{e \in A}k_e$. 
This proves the first statement and the third. The second is then a consequence of the well-known reciprocity theorem for the Ehrhart polynomial. The fourth statement is then also clear.
\end{proof}

The theorem above generalizes a result proved in \cite{DM-E}. In fact, the polynomial
$E_\LL(\vv)$ is a multivariate version of the Ehrhart polynomial. It may be considered as a \emph{mixed Ehrhart polynomial}.

For example if $\LL=\{(3,0), (0,2), (1,1)\}$, then
$$E_\LL(v_1, v_2, v_3)= 1+3v_1+2v_2+v_3+6v_1v_2+3v_1v_3+2v_1v_2.$$

\end{document}